\def\hB{\hspace*{\fill}$\qed$}
\title{Controlled objects as a symmetric monoidal functor}
\author{
Ulrich Bunke\thanks{Fakult{\"a}t f{\"u}r Mathematik,
Universit{\"a}t Regensburg,
93040 Regensburg,
GERMANY\newline
ulrich.bunke@mathematik.uni-regensburg.de} 
 and
Luigi Caputi\thanks{Fakult{\"a}t f{\"u}r Mathematik,
	Universit{\"a}t Regensburg,
	93040 Regensburg,
	GERMANY\newline
	luigi.caputi@mathematik.uni-regensburg.de}}
\numberwithin{equation}{section}
\newtheorem{theorem}{Theorem}[section] 
\newtheorem{prop}[theorem]{Proposition}
\newtheorem{lem}[theorem]{Lemma}
\newtheorem{ddd}[theorem]{Definition}
\newtheorem{kor}[theorem]{Corollary}
\newtheorem{ass}[theorem]{Assumption}
\theoremstyle{remark}
\theoremstyle{definition}
\newtheorem{rem}[theorem]{Remark}
\newtheorem{conv}[theorem]{Convention}
\newcommand{\UK}{\mathrm{UK}}
\newcommand{\Mor}{\mathrm{Mor}}
\newcommand{\bQ}{\mathbf{Q}}
\newcommand{\BC}{\mathbf{BornCoarse}}
\newcommand{\Fin}{\mathbf{Fin}}
\newcommand{\bB}{{\mathbf{B}}}
\newcommand{\cP}{\mathcal{P}}
\newcommand{\cK}{\mathcal{K}}
\newcommand{\Add}{{\mathtt{Add}}}
\newcommand{\bA}{{\mathbf{A}}}
\newcommand{\Alg}{{\mathbf{Alg}}}
\newcommand{\cU}{{\mathcal{U}}}
\newcommand{\cD}{{\mathcal{D}}}
 \newcommand{\Cat}{{\mathbf{Cat}}}
\newcommand{\Born}{\mathbf{Born}}
\newcommand{\Coarse}{\mathbf{Coarse}}
\renewcommand{\Add}{\mathbf{Add}}
\begin{document}
	
\maketitle

\begin{abstract}
The goal of this paper is to associate functorially to every symmetric monoidal additive category $\bA$ with a strict $G$-action a  {lax} symmetric monoidal functor
$\bV_{\bA}^{G}:G\BC\to \Add_{\infty}$ from the  symmetric monoidal category of $G$-bornological coarse spaces $G\BC$ to the symmetric monoidal {$\infty$-}category of additive categories $\Add_{\infty}$. This allows to 
{refine} equivariant coarse algebraic $K$-homology   to a {lax} symmetric monoidal functor.  \end{abstract}
\tableofcontents

\section{Introduction}

A  $\cC$-valued equivariant coarse homology theory is a functor $$E:G\BC\to \cC$$ satisfying a certain family of axioms  {\cite[Def. 3.10]{equicoarse}}.  Here, $G\BC$ is the category of $G$-bornological coarse spaces and $\cC$ is a stable cocomplete $\infty$-category.
We refer to \cite[Sec. 2.1]{equicoarse},  or {Section~\ref{geriogjheroigfwefwefw},} for details. The category $G\BC$ has a symmetric monoidal structure
$\otimes$, and if {also} $\cC$ has a symmetric monoidal structure, then we can ask whether {the functor} $E$ can be refined to a
lax symmetric monoidal functor.   Such a refinement can simplify calculations or can be applied to {obtain} localization results, see  \cite{bulu}.

In the present paper{, as an example for  $E$,} we consider the universal coarse algebraic $K$-homology  $$\UK\cX_{\bA}^{G}:G\BC\to \cM_{loc}$$ associated to an additive category $\bA$ with a strict action of the group $G$, where $\cM_{loc}$ is the {stable $\infty$-category on non-commutative motives, defined as the target of the} universal localizing invariant {$\cU_{loc}$} of Blumberg-Gepner-Tabuada \cite{MR3070515}. The functor $\UK\cX_{\bA}^{G}$
has been introduced in \cite{buci} as the universal variant of the spectrum-valued 
coarse algebraic $K$-homology  $K\cX_{\bA}^{G}$ constructed in \cite[Ch. 8]{equicoarse}.

By \cite[Thm. 5.8]{bgt-2}, the {$\infty$-}category $\cM_{loc}$ has a symmetric monoidal structure.

The main result of the present paper is the following theorem.
\begin{theorem}\label{rgreioghjo34tergergeg} A symmetric monoidal structure on  $\bA$  induces a  {lax} symmetric monoidal refinement
of the  functor $\UK\cX_{\bA}^{G}$.
\end{theorem}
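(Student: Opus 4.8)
The plan is to realize the desired refinement as a composite of lax symmetric monoidal functors, so that, once the category of controlled objects is equipped with a symmetric monoidal structure, the theorem becomes essentially formal. Recall that, by its construction in \cite{buci}, the underlying functor of $\UK\cX_{\bA}^{G}$ is the composite $G\BC \xrightarrow{\bV_{\bA}^{G}} \Add_{\infty} \xrightarrow{\Phi} \Cat_{\infty}^{\mathrm{perf}} \xrightarrow{\cU_{loc}} \cM_{loc}$, where $\bV_{\bA}^{G}$ sends a $G$-bornological coarse space $X$ to the additive category of ($G$-equivariant, locally finite) $X$-controlled objects over $\bA$ with their controlled morphisms, $\Phi$ sends an additive category to the idempotent completion of its bounded derived $\infty$-category for the split exact structure, and $\cU_{loc}$ is the universal localizing invariant. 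Since $\cU_{loc}$ is symmetric monoidal by \cite[Thm.~5.8]{bgt-2}, and lax symmetric monoidal functors between symmetric monoidal $\infty$-categories are stable under composition, it suffices to upgrade $\bV_{\bA}^{G}$ and $\Phi$ to lax symmetric monoidal functors and then to take the composite.

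The main point --- and the content announced in the abstract --- is the lax symmetric monoidal structure on $\bV_{\bA}^{G}$. For $X,Y\in G\BC$ I would define the \emph{external tensor product} of controlled objects by $(M\boxtimes N)(x,y):=M(x)\otimes_{\bA}N(y)$, with support the product of the supports of $M$ and $N$, and with the diagonal $G$-action assembled from those of $M$, of $N$, and the (strictly monoidal) $G$-action on $\bA$. One then checks that $M\boxtimes N$ is an $(X\otimes Y)$-controlled object: local finiteness of the support is preserved because the bornology of $X\otimes Y$ is generated by products of bounded subsets, and controlledness of morphisms is preserved because the product of an $X$-entourage with a $Y$-entourage is an $(X\otimes Y)$-entourage. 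This produces functors $\bV_{\bA}^{G}(X)\otimes\bV_{\bA}^{G}(Y)\to\bV_{\bA}^{G}(X\otimes Y)$, while the monoidal unit of $\bA$ supported over the one-point $G$-bornological coarse space gives a unit map $\mathbf{1}_{\Add_{\infty}}\to\bV_{\bA}^{G}(*)$. The associativity, unitality and symmetry constraints I would verify on an ordinary additive symmetric monoidal category of controlled objects, where they follow on the nose from the coherences of $\bA$ and of $G\BC$ together with the strict compatibility of products of entourages and of bornologies, and then transport the resulting structure to $\Add_{\infty}$, checking that the underlying $\infty$-functor recovers the one used to define $\UK\cX_{\bA}^{G}$.

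For $\Phi$ one argues similarly but more easily: the external tensor product of bounded complexes gives a bilinear functor $\mathrm{Ch}^{b}(\bA)\times\mathrm{Ch}^{b}(\bB)\to\mathrm{Ch}^{b}(\bA\otimes\bB)$ that is compatible with chain homotopy equivalences and with idempotent completion, hence a natural transformation $\Phi(\bA)\otimes\Phi(\bB)\to\Phi(\bA\otimes\bB)$; together with the identification of $\Phi$ of the unit additive category with the unit of $\Cat_{\infty}^{\mathrm{perf}}$ and a routine coherence check, this makes $\Phi$ lax symmetric monoidal. Composing the three functors then yields a lax symmetric monoidal functor $G\BC\to\cM_{loc}$ whose underlying functor is $\UK\cX_{\bA}^{G}$, as required. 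I expect the main obstacle to lie in the second paragraph: pinning down a model of $\Add_{\infty}$ with its symmetric monoidal structure for which the ordinary symmetric monoidal category of controlled objects genuinely computes $\bV_{\bA}^{G}$ as a symmetric monoidal $\infty$-functor, and carrying out the coherence bookkeeping for the lax structure maps equivariantly, i.e.\ compatibly with the diagonal $G$-action used on $G\BC$.
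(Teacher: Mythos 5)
Your overall decomposition coincides with the paper's: write $\UK\cX_{\bA}^{G}$ as controlled objects, followed by bounded complexes, followed by $\cU_{loc}$, equip $\bV_{\bA}^{G}$ with the external product given pointwise by $M(\{x\})\otimes_{\bA}N(\{y\})$, and compose lax symmetric monoidal functors; the $\boxtimes$ you describe is exactly the paper's $\boxtimes_{X,Y}$. But at both of the genuinely hard points your argument stops where the work begins. For $\bV_{\bA}^{G}$ you propose to verify the coherences on the nose and then ``transport the resulting structure to $\Add_{\infty}$'', and you yourself flag this transport as the main obstacle --- that obstacle \emph{is} the content of the theorem. Since $\Add$ is a $2$-category, a lax symmetric monoidal structure on a functor $G\BC\to\Add$ consists of transformations $\bV_{\bA}^{G}(X)\otimes_{\Add}\bV_{\bA}^{G}(Y)\to\bV_{\bA}^{G}(X\otimes Y)$ that are natural and coherent only up to specified $2$-isomorphisms satisfying higher conditions, and at least one non-strictness is unavoidable: pushforward does not commute strictly with $\boxtimes$; there is only a canonical comparison $(f\otimes f^{\prime})_{*}(M\boxtimes N)\to f_{*}M\boxtimes f^{\prime}_{*}N$, which is invertible because $\otimes_{\bA}$ preserves finite sums in each variable (Lemma \ref{lemmaiso}) but is not an identity. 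The paper avoids all of this bookkeeping by putting a \emph{strict} symmetric monoidal structure on the Grothendieck construction $\mathcal{V}_{\bA}^{G}\to G\BC$, checking that the projection preserves it strictly, that $\boxtimes_{X,Y}$ is bi-additive, and that the above comparison is an isomorphism; Theorems \ref{gui34tefrgeg} and \ref{rgio34t34r34rerg} (via cocartesian fibrations over $\Fin_{*}$ and \cite[Prop.~2.4.2.5]{HA}) then produce the operad map $\Nerve(G\BC^{\otimes})\to\Add_{\infty}^{\otimes}$. Without this, or an equivalent device, your second paragraph states the problem rather than solving it.

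The ``routine coherence check'' for the stabilization step likewise hides a real obstruction. To make $\bA\mapsto\Ch^{b}(\bA)$ symmetric monoidal as a functor to $\Cat^{ex,\otimes}_{\infty}$ one must descend the tensor product of dg-categories along Morita equivalences, and this fails in general because the Hom-complexes need not be flat over $\Z$ (the Hom-groups of an additive category are arbitrary abelian groups). The paper repairs this with a lax symmetric monoidal flat resolution functor $\bQ$ landing in locally flat dg-categories, for which the Morita localization does carry a symmetric monoidal structure, and only then invokes the symmetric monoidal refinement of $\cU_{loc}$ from \cite{bgt-2}. You would need this step, or an equivalent, before your composition argument closes.
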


The functor $\UK\cX_{\bA}^{G}$ is constructed as a composition of
functors
$$\UK\cX_{\bA}^{G}:G\BC\stackrel{\bV_{\bA}^{G}}{\to} \Add_{1}\stackrel{\UK}{\to} \cM_{loc}\ ,$$
where $X\mapsto \bV_{\bA}^{G}(X)$ associates to a $G$-bornological coarse space $X$ its additive category of  equivariant $X$-controlled $\bA$-objects {(see Definition \ref{def:Xcontrolledobject})}, and $\bC\mapsto \UK(\bC)$ sends an additive category $\bC$ to
the motive $\cU_{loc}(\Ch^{b}(\bC)[W^{-1}])$ of the associated stable $\infty$-category 
$\Ch^{b}(\bC)[W^{-1}]$ of bounded chain complexes over $\bC$, where $W$ is the set  of homotopy equivalences.
Since $\UK$ sends equivalences of additive categories to equivalences it has a factorization
$$\UK:\Add_{1}\stackrel{loc}{\to} \Add_{\infty}\stackrel{\UK_{\infty}}{\to} \cM_{loc} \ ,$$
where $loc:\Add_{1}\to \Add_{\infty}:=\Add_{1}[W^{-1}_{\Add}]$ is the localization at the equivalences {$W_{\Add}$ of additive categories,} in the realm of $\infty$-categories.

Theorem \ref{rgreioghjo34tergergeg} follows from the following two assertions:

\begin{theorem}[{Theorem \ref{mainthm}}]\label{rgreioghjo34tergergeg1}
A symmetric monoidal structure on $\bA$  induces a {lax} symmetric monoidal refinement
$$\bV_{\bA}^{G,\otimes}:G\BC\to \Add_{\infty}^{\otimes}$$
of the functor
$loc\circ \bV_{\bA}^{G}$.
\end{theorem}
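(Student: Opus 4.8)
The plan is to split Theorem~\ref{rgreioghjo34tergergeg1} into two steps: first, upgrade the ordinary functor $\bV_{\bA}^{G}\colon G\BC\to\Add_{1}$ to a lax symmetric monoidal functor for the tensor product $\otimes_{\Add}$ of additive categories; second, observe that the localization $loc\colon\Add_{1}\to\Add_{\infty}$ underlies a symmetric monoidal functor of $\infty$-categories. Composing then gives $\bV_{\bA}^{G,\otimes}$, since post-composing a lax symmetric monoidal functor with a strong symmetric monoidal functor is again lax symmetric monoidal and the underlying functor of the composite is by construction $loc\circ\bV_{\bA}^{G}$. Only laxness is needed, so the structure maps are not required to be equivalences --- fortunately, since $\bV_{\bA}^{G}(X\otimes Y)$ is in general far larger than $\bV_{\bA}^{G}(X)\otimes_{\Add}\bV_{\bA}^{G}(Y)$.

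The key construction for the first step is the \emph{external product}. For $G$-bornological coarse spaces $X$ and $Y$, the underlying set of $X\otimes Y$ is $X\times Y$ with the product coarse structure, the product bornology and the diagonal $G$-action. I would define a bifunctor
$$\boxtimes\colon\bV_{\bA}^{G}(X)\times\bV_{\bA}^{G}(Y)\longrightarrow\bV_{\bA}^{G}(X\otimes Y)$$
with point-components $(M\boxtimes N)_{(x,y)}:=M_{x}\otimes_{\bA}N_{y}$ and with matrix entries of controlled morphisms $(\phi\boxtimes\psi)_{(x,y),(x',y')}:=\phi_{x,x'}\otimes_{\bA}\psi_{y,y'}$, and check that it is well defined: the support of $M\boxtimes N$ lies in $\mathrm{supp}(M)\times\mathrm{supp}(N)$, hence is locally finite in $X\otimes Y$; the propagation of $\phi\boxtimes\psi$ lies in $\mathrm{prop}(\phi)\times\mathrm{prop}(\psi)$, hence is an entourage of $X\otimes Y$; and equivariance follows from the diagonal action together with the hypothesis that $G$ acts on $\bA$ through symmetric monoidal functors. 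Because the monoidal product of $\bA$ is additive in each variable, $\boxtimes$ is bi-additive, hence factors through $\otimes_{\Add}$ and yields a structure map $\mu_{X,Y}\colon\bV_{\bA}^{G}(X)\otimes_{\Add}\bV_{\bA}^{G}(Y)\to\bV_{\bA}^{G}(X\otimes Y)$; the unit map $\epsilon\colon\mathbf{1}_{\Add}\to\bV_{\bA}^{G}(*)$ classifies the point-supported object with value the monoidal unit $\mathbf{1}_{\bA}$, with its canonical $G$-equivariant structure. Naturality of $\mu_{X,Y}$ along morphisms of $G\BC$ reduces, via the pushforward formula $(f_{*}M)_{y}=\bigoplus_{x\in f^{-1}(y)}M_{x}$ and the same strictification of direct sums already used to make $\bV_{\bA}^{G}$ an honest functor, once more to additivity of $\otimes_{\bA}$.

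It remains to verify the lax symmetric monoidal coherence axioms for $(\bV_{\bA}^{G},\mu,\epsilon)$ --- the associativity constraint, the unit triangles and the symmetry constraint. On point-components these reduce, respectively, to the pentagon, the unitor and the hexagon axioms of the symmetric monoidal structure of $\bA$, together with the essentially strict, set-level coherences of $\otimes$ on $G\BC$, and the relevant squares commute once the relabelling bijections of index sets are tracked. I expect this to be the main obstacle, though an organizational rather than a conceptual one: assembling the family of external products into a single lax symmetric monoidal $1$-functor, rather than a loose collection of compatible bifunctors, while simultaneously matching the associator and braiding of $\bA$ with those of the monoidal product of $G\BC$ at the level of controlled objects and of controlled morphisms, and keeping all direct-sum choices strict. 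One could instead avoid the explicit bookkeeping by exhibiting $\bV_{\bA}^{G}$ as a commutative algebra object in $\mathrm{Fun}(G\BC,\Add_{1})$ equipped with the Day convolution product along $\otimes$, which is an equivalent but more abstract route.

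For the second step, the class $W_{\Add}$ of equivalences of additive categories is stable under $\otimes_{\Add}$, since tensoring an equivalence with any additive category is again an equivalence; hence, by the standard localization principle for symmetric monoidal $\infty$-categories (compare the construction of the symmetric monoidal structure on $\cM_{loc}$ in \cite{bgt-2}), the localization $\Add_{\infty}=\Add_{1}[W_{\Add}^{-1}]$ carries a symmetric monoidal structure $\Add_{\infty}^{\otimes}$ for which $loc$ is symmetric monoidal. Passing to nerves turns the lax symmetric monoidal $1$-functor of the first step into a lax symmetric monoidal functor of $\infty$-categories $G\BC\to\Add_{1}$, and composing it with the symmetric monoidal $loc$ produces the desired lax symmetric monoidal refinement $\bV_{\bA}^{G,\otimes}\colon G\BC\to\Add_{\infty}^{\otimes}$ of $loc\circ\bV_{\bA}^{G}$.
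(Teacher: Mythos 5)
Your ingredients are the right ones and coincide with the paper's (Section 3.4): the external product $\boxtimes$ with point-components $M_x\otimes_{\bA}N_y$, the unit object supported on $*$ with value $1_{\bA}$, bi-additivity, and the distributivity comparison $(f\otimes f')_*(M\boxtimes N)\to f_*M\boxtimes f'_*N$. The gap is in the assembly. The ``lax symmetric monoidal $1$-functor $(\bV_{\bA}^{G},\mu,\epsilon)$'' of your Step 1 does not exist as stated, for two reasons. First, $\mu$ is not strictly natural: for $f\colon X\to X'$ and $f'\colon Y\to Y'$ the two ways around the naturality square differ at a point $(x',y')$ by the canonical map $\bigoplus_{(x,y)}M_x\otimes_{\bA}N_y\to\bigl(\bigoplus_x M_x\bigr)\otimes_{\bA}\bigl(\bigoplus_y N_y\bigr)$, which is an isomorphism (this is Lemma \ref{lemmaiso}) but not an identity; no choice of direct sums makes $\otimes_{\bA}$ strictly distributive in both variables. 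Second, the coherence axioms you propose to verify are \emph{equations between functors} (morphisms of $\Add_1$); the two sides of the associativity diagram have point-components $(M_x\otimes_{\bA}N_y)\otimes_{\bA}P_z$ and $M_x\otimes_{\bA}(N_y\otimes_{\bA}P_z)$, which agree only up to the associator of $\bA$. So the axioms hold only up to $2$-morphisms, which then need their own coherence; this is exactly the $2$-categorical difficulty the paper identifies in the introduction, and it is conceptual rather than organizational. Your Day-convolution variant inherits the same problem: a commutative algebra in the $1$-categorical $\Fun(G\BC,\Add_1)$ again requires strict naturality and strict coherence, while in the $\infty$-categorical version one must actually produce the infinite tower of coherences by some mechanism rather than check finitely many axioms.

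The paper's solution is to repackage precisely your data so that only $1$-categorical equations remain: it equips the total category $\mathcal{V}_{\bA}^{G}$ of the Grothendieck construction of $\bV_{\bA}^{G}$ with an honest symmetric monoidal structure whose associator, unitor and symmetry are genuine morphisms (assembled from the constraints of $G\BC$ and of $\bA$), strictly compatible with the projection to $G\BC$ (Proposition \ref{efiweufowefewfw}); coherence there is an equality of morphisms in a $1$-category and can be checked directly. Theorem \ref{rgio34t34r34rerg} then converts this, together with bi-additivity (Proposition \ref{efiweufowefewfw1}) and the fact that your distributivity comparison is an isomorphism (Lemma \ref{lemmaiso}), into a map of $\infty$-operads $\Nerve(G\BC^{\otimes})\to\Add_{\infty}^{\otimes}$, by verifying that the induced map on nerves is a cocartesian fibration over $\Nerve(G\BC^{\otimes})$. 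To repair your argument you would need to substitute some such mechanism for your Step 1; the explicit constructions you give would then carry over essentially verbatim.
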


\begin{theorem}[Theorem \ref{rgreioghjo34tergergeg21}] \label{rgreioghjo34tergergeg2}
The functor $\UK_{\infty}$ admits a  {lax} symmetric monoidal refinement
$$\UK_{\infty}^{\otimes}:\Add_{\infty}^{\otimes}\to \cM_{loc}^{\otimes}\ .$$
\end{theorem}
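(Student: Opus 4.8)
The plan is to factor $\UK_{\infty}$ as the composite
\[
\UK_{\infty}\colon\ \Add_{\infty}\ \xrightarrow{\ \cD^{b}\ }\ \Cat_{\infty}^{\mathrm{ex}}\ \xrightarrow{\ (-)^{\natural}\ }\ \Cat_{\infty}^{\mathrm{perf}}\ \xrightarrow{\ \cU_{loc}\ }\ \cM_{loc}\ ,
\]
where $\cD^{b}$ is induced by $\bC\mapsto\Ch^{b}(\bC)[W^{-1}]$ (the bounded derived $\infty$-category of $\bC$ viewed as an additive category with its split exact structure; this is a small stable $\infty$-category, in general not idempotent complete), $(-)^{\natural}$ is idempotent completion, and $\cU_{loc}$ is the universal localizing invariant of \cite{MR3070515}. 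For the last two factors a symmetric monoidal refinement is available in the literature: by \cite[Thm.~5.8]{bgt-2} the functor $\cU_{loc}$ refines to a lax symmetric monoidal functor $\cU_{loc}^{\otimes}\colon(\Cat_{\infty}^{\mathrm{perf}})^{\otimes}\to\cM_{loc}^{\otimes}$, where $\Cat_{\infty}^{\mathrm{perf}}$ carries the tensor product induced from the Lurie tensor product of small stable $\infty$-categories, and idempotent completion is a symmetric monoidal reflective localization $(\Cat_{\infty}^{\mathrm{ex}})^{\otimes}\to(\Cat_{\infty}^{\mathrm{perf}})^{\otimes}$. So the task reduces to producing a lax symmetric monoidal refinement $\cD^{b,\otimes}\colon\Add_{\infty}^{\otimes}\to(\Cat_{\infty}^{\mathrm{ex}})^{\otimes}$ of $\cD^{b}$; composing then gives $\UK_{\infty}^{\otimes}$.

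To construct $\cD^{b,\otimes}$ I would first work with $1$-categories. A symmetric monoidal structure on an additive category $\bC$ equips $\Ch^{b}(\bC)$ with the tensor product of bounded chain complexes, carrying the usual Koszul signs --- these are legitimate because $\bC$ is additive --- and this makes $\Ch^{b}(\bC)$ symmetric monoidal. For additive categories $\bC,\bD$ the external tensor product of complexes defines a functor $\Ch^{b}(\bC)\times\Ch^{b}(\bD)\to\Ch^{b}(\bC\otimes\bD)$ which is additive in each variable, hence factors through the tensor product $\Ch^{b}(\bC)\otimes\Ch^{b}(\bD)$ of additive categories, the symmetric monoidal structure $\otimes$ on $\Add_{1}$ being the one used in Theorem~\ref{rgreioghjo34tergergeg1}. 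Together with the unit constraint $\mathbf{1}_{\Add_{1}}\to\Ch^{b}(\mathbf{1}_{\Add_{1}})$ placing an object in degree $0$, and the coherence isomorphisms coming from those of the tensor product of complexes, these data assemble $\bC\mapsto\Ch^{b}(\bC)$ into a lax symmetric monoidal functor $\Add_{1}^{\otimes}\to\Add_{1}^{\otimes}$.

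Next I would invert the homotopy equivalences. The crucial point is that this is a symmetric monoidal localization: the tensor product of a chain homotopy with an identity morphism is again a chain homotopy, so the external product of a homotopy equivalence in $\Ch^{b}(\bC)$ with an arbitrary object of $\Ch^{b}(\bD)$ is a homotopy equivalence in $\Ch^{b}(\bC\otimes\bD)$, and similarly for the internal tensor product of complexes over a symmetric monoidal $\bC$; thus the class $W$ of homotopy equivalences is stable under tensoring with arbitrary objects, naturally in $\bC$. By the general formalism of symmetric monoidal localizations, the composite of $\bC\mapsto\Ch^{b}(\bC)$ with the pointwise localization functors then refines to a lax symmetric monoidal functor $\Add_{1}^{\otimes}\to(\Cat_{\infty}^{\mathrm{ex}})^{\otimes}$ sending $\bC$ to $\Ch^{b}(\bC)[W^{-1}]$. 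Since this functor inverts the equivalences $W_{\Add}$ of additive categories, and since $\Add_{\infty}^{\otimes}$ is (in the setting of Theorem~\ref{rgreioghjo34tergergeg1}) the symmetric monoidal localization of $\Add_{1}^{\otimes}$ at $W_{\Add}$ --- which is legitimate because $W_{\Add}$ is itself stable under tensoring with objects of $\Add_{1}$ --- it descends along $loc^{\otimes}\colon\Add_{1}^{\otimes}\to\Add_{\infty}^{\otimes}$ to the desired $\cD^{b,\otimes}$.

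The main obstacle is the coherence implicit in the last two steps. One must upgrade $\bC\mapsto\Ch^{b}(\bC)$ from an assignment that is symmetric monoidal objectwise to a genuine lax symmetric monoidal functor of $\infty$-categories (a morphism of commutative monoids in $\Cat_{\infty}$) --- that is, organize the Koszul-sign bookkeeping into coherent homotopy-associativity and -commutativity data --- and one must check that inverting homotopy equivalences is a symmetric monoidal localization functorially in $\bC$, not merely for a fixed $\bC$. A clean way to handle both is to present $\Ch^{b}(-)$ together with its tensor product by a strictly symmetric monoidal model (for instance via a cofibrant replacement of the relevant operad, or by strictification of the corresponding pseudofunctor), so that the standard criteria for symmetric monoidal localizations apply verbatim; alternatively one may invoke an already existing multiplicative structure on the passage from additive, or exact, categories to their bounded derived $\infty$-categories. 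The remaining verifications --- biadditivity of the external product, stability of $W$ and $W_{\Add}$ under tensoring, and the identification of the monoidal unit --- are routine.
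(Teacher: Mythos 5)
Your factorization is genuinely different from the paper's. The paper does not try to put a multiplicative structure directly on $\bC\mapsto \Ch^{b}(\bC)[W^{-1}]$; instead it rewrites $\Ch^{b}[-]_{\infty}$ as a composite through the Morita localization of dg-categories and the symmetric monoidal Dold--Kan equivalence with $H\Z$-linear stable idempotent complete $\infty$-categories, so that all multiplicative coherence is imported from the existing literature (To\"en, Cohn, Blumberg--Gepner--Tabuada). The price of that route is that the tensor product of dg-categories is not compatible with Morita equivalences, which forces the paper to introduce a lax symmetric monoidal flat resolution functor $\bQ$ landing in locally flat dg-categories and to bypass the offending node of its diagram. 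Your route, which localizes only at chain homotopy equivalences rather than Morita equivalences, genuinely sidesteps that flatness problem: a chain homotopy tensored with an identity is again a chain homotopy, so no resolution functor is needed. That would be an attractive simplification if it could be completed.

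The gap is at the central step, and it is exactly the step the paper's detour is designed to avoid. You appeal to ``the general formalism of symmetric monoidal localizations'' to pass from the lax symmetric monoidal endofunctor $\Ch^{b}(-)$ of $\Add_{1}$ to a lax symmetric monoidal functor $\Add_{1}^{\otimes}\to\Cat^{ex,\otimes}_{\infty}$. But that formalism (Hinich, as cited in the paper) localizes a \emph{single} symmetric monoidal ($\infty$-)category at a tensor-stable class of morphisms; what you need is different: a fiberwise localization of each value $\Ch^{b}(\bC)$ at its homotopy equivalences, carried out coherently in $\bC$, combined with a change of the target monoidal structure from the additive tensor product $\Ch^{b}(\bC)\otimes_{\Add_{1}}\Ch^{b}(\bD)$ to the Lurie tensor product of stable $\infty$-categories, through which the external product only factors after checking exactness in each variable. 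Your objectwise verifications (biadditivity, Koszul signs, stability of $W$ and $W_{\Add}$ under tensoring) are correct but only produce the structure maps one pair of objects at a time; assembling them into a morphism of $\infty$-operads is the actual content of the theorem, and your proposed remedies (``cofibrant replacement of the relevant operad'', ``strictification of the corresponding pseudofunctor'', ``invoke an already existing multiplicative structure'') are placeholders rather than arguments --- the last of them is, in effect, what the paper does, and carrying it out concretely is precisely what brings in the dg-category machinery and the flatness issue you hoped to avoid. To close the gap along your route you would have to supply this coherence construction explicitly, for instance by exhibiting a symmetric monoidal cocartesian fibration over $\Add_{1}^{\otimes}$ with fibers $\Ch^{b}(\bC)$ and performing a monoidal fiberwise localization, in the spirit of Theorem \ref{gui34tefrgeg} and Theorem \ref{rgio34t34r34rerg}.
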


The main difficulty in {proving} Theorem \ref{rgreioghjo34tergergeg1} is that the symmetric monoidal category of {small} additive categories is of a $2$-categorical nature. A {pedestrian} approach to {the proof of} this theorem would thus require to work with symmetric monoidal structures on $2$-categories and therefore   tedious considerations of a large set of commuting diagrams.  In this paper we prefer to use the language of symmetric monoidal $\infty$-categories. 
In Section \ref{vevgebvveerverv}{, by using  the {Grothendieck} construction,}
 we encode the functor $\bV_{\bA}^{G}{\colon G\BC \to \Add_{1}}$ into {a} cocartesian fibration $\cV_{\bA}^{G}\to G\BC$
coming from an op-fibration of $1$-categories. We then encode a symmetric monoidal
refinement of the functor $\bV_{\bA}^{G}$ into a symmetric monoidal  structure on $\cV_{\bA}^{G}$ and a 
symmetric monoidal refinement of  the functor to $G\BC$. This only requires $1$-categorical considerations. 
The machine of $\infty$-categories then produces, as explained in Section \ref{fgoiwfwefewfwefwefwe},  the asserted  symmetric monoidal refinement  
in Theorem \ref{rgreioghjo34tergergeg1}.

The technical  results  Theorem  \ref{gui34tefrgeg} and Theorem  \ref{rgio34t34r34rerg} might be of independent interest in cases where one wants to construct symmetric monoidal refinements of functors from $1$-categories to $\Cat_{1}$ or $\Add_{1}$.

Theorem \ref{rgreioghjo34tergergeg2} is shown in Section \ref{foiwejfowfwefefwef} by combining   various results  in the literature on $dg$-categories.

 {\em Acknowledgements: We thank Denis-Charles Cisinksi and Thomas Nikolaus for helpful discussion. U.B. was supported by the SFB 1085 (Higher Invariants) and L.C. was {supported} by the GK 1692 (Curvature, Cycles, and Cohomology).}

\section{Symmetric monoidal functors to $\Cat$ and $\Add$}\label{fgoiwfwefewfwefwefwe}

{In this section, we construct lax symmetric monoidal refinements of functors from symmetric monoidal $1$-categories to the categories $\Cat_{1}$  (and $\Add_{1}$) of small (additive) categories. }

\subsection{From $2$- to $\infty$-categories}

 A symmetric monoidal structure on a $1$-category $\bC$ consists of the tensor functor $$\otimes_{\bC}:\bC\times \bC\to \bC\ ,$$ the tensor unit $1_{\bC}$, and the associator, symmetry and unit-transformations, which must satisfy various compatibility relations.  
 If $\bC$ and $\bD$  are   symmetric monoidal  $1$-categories, then we can consider lax symmetric monoidal functors from $\bC$ to $\bD$. 
Such a lax symmetric monoidal  functor is given by  a functor  $F:\bC\to \bD$     together with
a natural transformation $$F(C)\otimes_{\bD} F(C')\to F(C\otimes_{\bC} C^{\prime})\ , \quad C,C^{\prime}\in \bC$$
{that} is compatible with the associators, symmetries and unit-transformations of $\bC$ and $\bD$ in a suitable way. We will list these structures and relations in Subsection \ref{ewfoiewfwefwefwef} below.

The categories $\Add$ or $\Cat$ of small additive categories and small categories are naturally $2$-categories.  Furthermore, the category $\Cat$ is symmetric monoidal with respect to the Cartesian symmetric monoidal structure $\times:=\otimes_{\Cat}$. The category $\Add$ has also a symmetric monoidal structure  $\otimes_{\Add}$: if $\bA$ and $\bB$ are two additive categories, then the objects of the tensor product $\bA\otimes_{\Add} \bB$
are pairs $(A,B)$ of objects   $A$ in $\bA$ and $B$ in $\bB$, and the morphisms are given by the tensor product
$$\Hom_{\bA\otimes_{\Add} \bB}((A,B),(A^{\prime},B^{\prime})):= \Hom_{\bA}(A,A^{\prime})\otimes_{\Z} \Hom_{\bB}(B,B^{\prime})$$ of abelian groups.

In the case of a  symmetric monoidal structure on a $2$-category, like $\Cat$ or $\Add$, {we have the same {compatibility} relations} between the structures {(tensor functor, tensor unit,  etc.)} {as} in the $1$-categorical case, {but they} are satisfied up to $2$-morphisms {only,} which in turn must satisfy higher compatibility relations. A similar remark applies to the notion of a (lax) symmetric monoidal functor.

In the present paper we consider the $1$-categorical situation as explicitly manageable, and we will avoid to explicitly work with symmetric monoidal structures on $2$-categories.  

Let $\bC$ be a symmetric monoidal $1$-category. Our goal is to construct symmetric monoidal functors
$F:\bC\to \Cat$ or $F:\bC\to \Add$ using $1$-categorical data only. Instead of working with the symmetric monoidal $2$-categories $\Cat$ or $\Add$ we will actually use the associated symmetric monoidal
$\infty$-categories $\Cat_{\infty}$ or  $\Add_{\infty}$. 

We  start with the
 ordinary category $\Cat_{1}$ of small categories.  Let $W_{\Cat}$ be the equivalences in $ \Cat_{1} $. The localization in large $\infty$-categories
$$\Cat_{\infty}:= \Nerve(\Cat_{1}) [W_{\Cat}^{-1}]$$
is the large $\infty$-category of categories. It models the $2$-category $\Cat$ in the following sense. 
The $2$-category $\Cat$  can be considered as a category enriched in categories.
Applying the nerve functor {$\Nerve$} to the $\Hom$-categories in $\Cat$
we get a fibrant\footnote{i.e.,  the $\Hom$-complexes are Kan complexes
} simplicially enriched category $\Nerve(\Cat)$. Applying the {homotopy} coherent nerve functor {$\mathcal{N}$,} we get an $\infty$-category 
$$\Nerve_{2}(\Cat):=\mathcal{N}(\Nerve(\Cat))\ . $$
Then, we have an equivalence of $\infty$-categories
$$\Nerve_{2}(\Cat)\simeq \Cat_{\infty}\ . $$
We refer to the appendix of \cite{GHN17} for more details about $\Nerve_{2}$.

The category $\Cat_{1}$ is a symmetric monoidal category and therefore gives rise to an op-fibration   of $1$-categories \cite[Constr. 2.0.01]{HA}, and {to} a symmetric monoidal $\infty$-category  \cite[Def. 2.0.0.7 \& Ex. 2.1.2.21]{HA}
$$\Cat_{1}^{\otimes}\to \Fin_{*}\ ,\text{ and }\quad \Nerve(\Cat_{1}^{\otimes})\to \Nerve(\Fin_{*}) \ ,$$  respectively. 
The equivalences $W_{\Cat}$ are preserved by the cartesian product. Hence we can form a symmetric monoidal localization \cite[Prop. 3.2.2]{hinich}     $$\Cat_{\infty}^{\otimes} :=\Nerve(\Cat_{1}^{\otimes})[W_{\Cat}^{\otimes,-1}] \to \Nerve(\Fin_{*})$$
 whose underlying $\infty$-category is equivalent to $\Cat_{\infty}$. Conseqently,   the symmetric monoidal $\infty$-category $\Cat_{\infty}^{\otimes}\to  \Nerve(\Fin_{*})$
 models the symmetric monoidal $2$-category $\Cat$. In this way we avoid to spell out the structures of a symmetric monoidal $2$-category explicitly.
 
A similar reasoning applies to $\Add$. We consider the {large} $1$-category $\Add_{1}$ of {small} additive categories and exact functors with the equivalences $W_{\Add}$. Then we define  the {large} $\infty$-category
$$\Add_{\infty}:=\Nerve(\Add_{1})[W_{\Add}^{-1}]$$ and get an equivalence   
$$\Add_{\infty}\simeq \Nerve_{2}(\Add)\ .$$ 
We can consider $\Add_{1}$ as a symmetric monoidal category giving rise to {an}  op-fibration of $1$-categories and  a symmetric monoidal $\infty$-category 
$$\Add_{1}^{\otimes} \to \Fin_{*}\ , \quad \Nerve(\Add_{1}^{\otimes}) \to \Nerve(\Fin_{*})\ .$$
Since the equivalences $W_{\Add}$  are   preserved by the tensor  product {$\otimes_{\Add}$,} we get the 
symmetric monoidal localization 
 $$\Add_{\infty}^{\otimes}:=\Nerve(\Add_{1}^{\otimes})[W_{\Add}^{\otimes,-1}] \to \Nerve(\Fin_{*})$$
 whose underlying $\infty$-category is equivalent to $\Add_{\infty}$. Therefore  $\Add_{\infty}^{\otimes}\to \Nerve(\Fin_{*})$
  models the symmetric monoidal $2$-category $\Add$.

Let $\bC$ be an ordinary category.
A functor $F:\bC\to \Cat_{1}$ (or $F:\bC\to \Add_{1}$) gives rise to a functor between $\infty$-categories
$F_{\infty}:\Nerve(\bC)\to  \Cat_{\infty}$ (or $F_{\infty}:\Nerve(\bC)\to \Add_{\infty}$) in the natural way, e.g. as the composition
$$F_{\infty}: \Nerve(\bC)\stackrel{\Nerve(F)}{\to} \Nerve(\Cat_{1})\to    \Nerve(\Cat_{1})[W_{\Cat}^{-1}]=\Cat_{\infty}\ .$$

A symmetric monoidal $1$-category $\bC$ gives rise to  the symmetric monoidal $\infty$-category  $\Nerve(\bC^{\otimes})\to \Nerve(\Fin_{*})$ whose underlying $\infty$-category is equivalent to $\Nerve(\bC)$. We now consider a functor 
 $F:\bC\to \Cat_{1}$ (or $F: \bC\to \Add_{1}$). {Recall that a map of $\infty$-operads \cite[Def. 2.1.2.7]{HA} can be thought of as a (lax) symmetric monoidal functor \cite[Def. 2.1.3.7]{HA} between the underlying categories.}

Let $F$ and $F_{\infty}$ be as above.

 \begin{ddd}\label{gfuzfkuzfkuzf}
 	A lax symmetric monoidal refinement of  $ F$ 
	is   a morphism of $\infty$-operads 
$$F^{\otimes}:\Nerve(\bC^{\otimes})\to \Cat^{\otimes}_{\infty}\ , \quad (F^{\otimes}:\Nerve(\bC^{\otimes})\to \Add^{\otimes}_{\infty})$$
{that} induces a functor equivalent to $F_{\infty}$ on the  underlying $\infty$-categories.   
\end{ddd}

Using this definition we avoid to spell out the details of the notion of a lax-symmetric functor from $\bC$ to the $2$-category $\Cat$ or $\Add$.

\subsection{Symmetric monoidal refinements of functors to $\Cat_{1}$ and $\Add_{1}$}

{In this subsection we state the technical results Theorem \ref{gui34tefrgeg} and Theorem \ref{rgio34t34r34rerg} which provide lax symmetric monoidal refinements of functors to $\Cat_{1}$ and $\Add_{1}$.}

Let $\bC$ be a $1$-category. A functor between  $1$-categories
$$F:\bC\to \Cat_{1}$$ can be interpreted, via the Grothendieck construction,  
as a cocartesian fibration
$$\pi_{F}:\cF\to \bC\ .$$ An object of the $1$-category 
$\cF$ is a pair $(X,A)$ with $X$ in $\bC$ and $A$ in $F(X)$. A morphism $(X,A)\to (Y,B)$ is a pair $(f,\phi)$ of a morphism
$f:X\to Y$ in $\bC$ and a morphim $\phi:F(f)(A)\to B$ in $F(Y)$. 

Assume that the categories $\bC$ and $\cF$ have symmetric monoidal structures such that \begin{equation}\label{werfqelknlweewwefw} 
 \pi_{F}((X,A)\otimes_{\cF}(X^{\prime},A^{\prime}))=X\otimes_{\bC}X^{\prime}\ ,  \end{equation} i.e., $\pi_{F}$ preserves the tensor product strictly. 
Then, we can write $$(X,A)\otimes_{\cF}(X^{\prime},A^{\prime})=(X\otimes_{\bC} X^{\prime}, A\boxtimes_{X,X^{\prime}}A^{\prime})\ .$$
 For every  two objects $X,X^{\prime}$ in $\bC$ we obtain a bifunctor 
\begin{equation}\label{f3foi34jfio34jf34f34f3f}
\boxtimes_{X,X^{\prime}}:F(X)\times F(X^{\prime})\to F(X\otimes_{\bC}X^{\prime})
\end{equation}
which is defined on morphisms in the canonical way.
Let $$f:X\to X^{\prime}\ , \quad g:Y \to Y^{\prime}$$ be morphisms in $\bC$ and $A$ in $F(X)$ and $B$ in $F(Y)$. Then
$$(f,\id_{F(f)(A)}):(X,A)\to (X^{\prime} ,F(f)(A))\ , \quad (g,\id_{F(g)(B)}):(Y,B)\to (Y^{\prime}, F(g)(B))$$  are  morphisms in $\cF$.
Then the second component of their tensor product 
$$(f,\id_{F(f)(A)})\otimes_{\cF} (g,\id_{F(g)(B)}) :(X\otimes_{\bC} Y,A\boxtimes_{X,Y}B)\to
(X^{\prime}\otimes_{\bC} Y^{\prime}, F(f)(A) \boxtimes_{X^{\prime},Y^{\prime}}F(g)(B )) $$
 is a morphism \begin{equation}\label{vwrkjbwwejkvjbnkwevewvw}
F(f\otimes_{\bC}g)(A\boxtimes_{X,Y}B)\to  F(f)(A) \boxtimes_{X^{\prime},Y^{\prime}}F(g)(B )
  \end{equation}
  in $F(X^{\prime}\otimes_{\bC}Y^{\prime})$.
  This morphism will appear in the assumptions of the two theorems below.

We now consider the following data:
\begin{enumerate}
\item {a symmetric monoidal $1$-category $\bC$,}
\item {a} functor $F:\bC\to \Cat_{1}$,
\item {a} symmetric monoidal structure   on the Grothendieck construction $\cF$ of $F$.
\end{enumerate}
Let $\pi_{F}:\cF\to \bC$ denote the associated projection.

\begin{theorem}\label{gui34tefrgeg}
 Assume:
 \begin{enumerate}
\item \label{regoijrogergergf2f} The functor $\pi_{F}$ {strictly} preserves   the tensor product,   the tensor unit as well as the associator, unit, and symmetry transformations.
\item\label{f2f3f23e23e}  \label{regoijrogergergf2f2} For every two  objects $(X,A)$ and $(Y,B)$ in $\cF$ and morphisms $f:X\to X^{\prime}$ and $g:Y\to Y^{\prime}$ in $\bC$ the   morphism 
\eqref{vwrkjbwwejkvjbnkwevewvw}
 $$F(f\otimes_{\bC}g)(A\boxtimes_{X,Y} B) \to  F(f)(A)\boxtimes_{X^{\prime}{,}Y^{\prime}} F(g)(B)\ .$$
is an isomorphism.
\end{enumerate}
Then the data provide   a lax symmetric monoidal refinement  (Def. \ref{gfuzfkuzfkuzf})
$$F^{\otimes}:\Nerve(\bC^{\otimes})\to \Cat^{\otimes}_{\infty}$$
of the functor $F$.
 \end{theorem}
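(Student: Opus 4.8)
The plan is to exhibit the morphism of $\infty$-operads $F^{\otimes}$ directly from the symmetric monoidal structure on the Grothendieck construction $\cF$, by first encoding everything on the level of ordinary categories (where hypotheses \eqref{regoijrogergergf2f} and \eqref{regoijrogergergf2f2} live) and then localizing. Since $\cF$ is a symmetric monoidal $1$-category, by \cite[Constr. 2.0.01]{HA} it gives rise to an op-fibration of $1$-categories $\cF^{\otimes}\to \Fin_{*}$, hence to a symmetric monoidal $\infty$-category $\Nerve(\cF^{\otimes})\to \Nerve(\Fin_{*})$; likewise for $\bC$. Because $\pi_{F}$ strictly preserves the tensor product, tensor unit, associator, unit and symmetry transformations by hypothesis \eqref{regoijrogergergf2f}, it induces a strict symmetric monoidal functor of $1$-categories $\pi_{F}^{\otimes}\colon \cF^{\otimes}\to \bC^{\otimes}$ over $\Fin_{*}$, and after applying $\Nerve$ a map of symmetric monoidal $\infty$-categories $\Nerve(\pi_{F}^{\otimes})\colon \Nerve(\cF^{\otimes})\to \Nerve(\bC^{\otimes})$.

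The first key step is to check that $\Nerve(\pi_{F}^{\otimes})$ is a cocartesian fibration. On the underlying categories $\pi_{F}\colon\cF\to\bC$ is a cocartesian fibration by the Grothendieck construction, so $\Nerve(\pi_{F})$ is a cocartesian fibration of $\infty$-categories. For the symmetric monoidal version one must produce cocartesian lifts over the inert and active morphisms of $\Fin_{*}$; over inerts this follows from the underlying cocartesian fibration, and over active morphisms $\langle n\rangle \to \langle 1\rangle$ the requisite lift is produced from the bifunctors $\boxtimes_{X,X'}$ of \eqref{f3foi34jfio34jf34f34f3f}, using precisely hypothesis \eqref{regoijrogergergf2f2}: the isomorphism \eqref{vwrkjbwwejkvjbnkwevewvw} is exactly the compatibility between the cocartesian pushforward along $f\otimes_{\bC} g$ and the pushforwards along $f$ and $g$ combined with $\boxtimes$, so that the candidate lift is indeed $\pi_{F}^{\otimes}$-cocartesian. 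Thus $\Nerve(\pi_{F}^{\otimes})$ is a cocartesian fibration of $\infty$-operads, i.e. a symmetric monoidal functor, which by the $\infty$-categorical Grothendieck/straightening correspondence (\cite[Rem. 2.4.2.6]{HA}) classifies a symmetric monoidal functor $\Nerve(\bC^{\otimes})\to \Cat_{\infty}^{\otimes}$; restricting to lax structure and composing with the localization $\Nerve(\Cat_{1}^{\otimes})[W_{\Cat}^{\otimes,-1}]\simeq\Cat_{\infty}^{\otimes}$ gives the sought $F^{\otimes}\colon\Nerve(\bC^{\otimes})\to\Cat_{\infty}^{\otimes}$.

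The second key step is identifying the underlying functor of $F^{\otimes}$ with $F_{\infty}$ as required by Definition \ref{gfuzfkuzfkuzf}. Passing to underlying $\infty$-categories (the fibre over $\langle 1\rangle$), $\Nerve(\pi_{F}^{\otimes})$ restricts to $\Nerve(\pi_{F})\colon\Nerve(\cF)\to\Nerve(\bC)$, which straightens to $F_{\infty}$ by compatibility of the $1$-categorical and $\infty$-categorical Grothendieck constructions through $\Nerve$; this is a routine compatibility statement, e.g. via \cite[Prop. 3.2.0.1]{HTT} applied to the unstraightening of $\Nerve(F)$ and the nerve of the $1$-categorical unstraightening. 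I expect the main obstacle to be the careful verification in the second step of the cocartesian-lift argument above: namely that hypothesis \eqref{regoijrogergergf2f2} really suffices to produce cocartesian morphisms over the active generators of $\Fin_{*}$, and equivalently that the $1$-categorical op-fibration $\cF^{\otimes}\to\Fin_{*}$ already has all cocartesian morphisms over $\bC^{\otimes}$ — since every other ingredient ($\Nerve$ preserves pullbacks and finite products, localization is symmetric monoidal, straightening is natural) is formal. Once that lift is in hand, the remaining identifications are diagram-chasing in the nerve and the statement follows.
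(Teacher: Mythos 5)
Your proposal follows essentially the same route as the paper: reduce to showing that the strict symmetric monoidal functor $\pi_{F}^{\otimes}\colon\cF^{\otimes}\to\bC^{\otimes}$ is an op-fibration of $1$-categories, produce the cocartesian lifts componentwise over morphisms of $\bC^{\otimes}$ using hypothesis \ref{f2f3f23e23e} as the compatibility that makes the candidate lifts cocartesian, and then pass to the lax symmetric monoidal functor via the $\Nerve(\bC)$-monoid/straightening correspondence of \cite[Sec. 2.4.2]{HA}. The only cosmetic difference is that the paper phrases the last step through the equivalence $\Alg_{\Nerve(\bC)}(\Cat_{\infty})\simeq\Mon_{\Nerve(\bC)}(\Cat_{\infty})$ rather than through straightening directly, and note that what the correspondence yields is precisely the operad map (lax structure), so no further ``restriction'' or extra localization of $\Cat_{\infty}^{\otimes}$ is needed.
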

Note that Condition \ref{regoijrogergergf2f} in the theorem implies the Relation \eqref{werfqelknlweewwefw} so that the bifunctors $\boxtimes_{X,Y}$
appearing in Condition \ref{regoijrogergergf2f2} are, in fact, defined.

The {analoguous} version for additive categories is the following.

Consider the following data:
\begin{enumerate}
\item {a symmetric monoidal $1$-category $\bC$,}
\item {a} functor $F:\bC\to \Add_{1}$,
\item {a} symmetric monoidal structure   on the Grothendieck construction $\cF$ of $F$.
\end{enumerate}
Let $\pi_{F}:\cF\to \bC$ denote the {associated} projection.

\begin{theorem}\label{rgio34t34r34rerg}
 Assume:
 \begin{enumerate}
\item The functor $\pi_{F}$  {strictly} preserves   the tensor product,   the tensor unit as well as the associator, unit, and symmetry transformations.
\item\label{gioreg34teergegegegeg} The functors $\boxtimes_{X,X^{\prime}}$ are bi-additive for every $X,X^{\prime}$ in $\bC$.
\item  For every two objects $(X,A)$ and $(Y,B)$ in $\cF$ and morphisms $f:X\to X^{\prime}$ and $g:Y\to Y^{\prime}$ in $\bC$ the   morphism 
\eqref{vwrkjbwwejkvjbnkwevewvw}
 $$F(f\otimes_{\bC}g)(A\boxtimes_{X,Y} B) \to  F(f)(A)\boxtimes_{X^{\prime},Y^{\prime}} F(g)(B)\ .$$
is an isomorphism.
\end{enumerate}
Then the data provide a lax symmetric monoidal refinement  
$$F^{\otimes}:\Nerve(\bC^{\otimes})\to \Add^{\otimes}_{\infty}$$
of the functor $F$.

\end{theorem}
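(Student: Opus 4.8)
The plan is to deduce Theorem \ref{rgio34t34r34rerg} from Theorem \ref{gui34tefrgeg} by a transfer-along-localization argument. The key observation is that the forgetful functor $\Add_1 \to \Cat_1$ fits into the larger picture: an additive category is a category with extra structure (biproducts, abelian-group enrichment), and this extra structure can be recovered functorially. So the strategy is: first apply Theorem \ref{gui34tefrgeg} to the underlying functor $F^{u}\colon \bC \to \Cat_1$ obtained by post-composing $F$ with the forgetful functor $u\colon \Add_1 \to \Cat_1$. Condition (1) for $F$ is literally Condition \ref{regoijrogergergf2f} for $F^u$, and Condition (3) for $F$ is literally Condition \ref{f2f3f23e23e} for $F^u$ — note that the Grothendieck construction of $F^u$ is the same category $\cF$ (only its objects' internal structure is forgotten), and the bifunctors $\boxtimes_{X,X'}$ are unchanged on objects and morphisms. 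Thus Theorem \ref{gui34tefrgeg} already yields a lax symmetric monoidal refinement $(F^u)^{\otimes}\colon \Nerve(\bC^{\otimes}) \to \Cat_\infty^{\otimes}$.

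Next I would promote this from $\Cat_\infty^{\otimes}$ to $\Add_\infty^{\otimes}$. Here Condition \ref{gioreg34teergegegegeg} — bi-additivity of the $\boxtimes_{X,X'}$ — is exactly what is needed: it guarantees that the monoidal structure on $\cF$ restricts appropriately to the abelian-group-enriched fibers, i.e. that $\otimes_{\cF}$ is compatible with the additive structure fiberwise. Concretely, I would argue that the symmetric monoidal localization $\Add_\infty^{\otimes} = \Nerve(\Add_1^{\otimes})[W_{\Add}^{\otimes,-1}]$ sits over $\Cat_\infty^{\otimes}$ via the lax symmetric monoidal forgetful functor $u_\infty^{\otimes}\colon \Add_\infty^{\otimes} \to \Cat_\infty^{\otimes}$ (which exists since $u$ preserves products up to the canonical comparison and preserves equivalences), and that a lift of $(F^u)^{\otimes}$ along $u_\infty^{\otimes}$ is determined by the $1$-categorical data already present. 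More precisely, the Grothendieck construction $\cF$ together with its symmetric monoidal structure, viewed now as an op-fibration of additive categories over $\bC$ with $\Add$-valued fibers (using Condition \ref{gioreg34teergegegegeg} to see that $\otimes_{\cF}$ lands in $\Add_1$ after localization), gives a map $\Nerve(\bC^{\otimes}) \to \Add_1^{\otimes}$; composing with the symmetric monoidal localization $\Nerve(\Add_1^{\otimes}) \to \Add_\infty^{\otimes}$ produces the desired $F^{\otimes}$. That its underlying functor is equivalent to $(loc \circ)F_\infty$ follows because the underlying functor is computed by forgetting the monoidal data, where we already know the answer from the $\Cat$-case together with the compatibility of $u_\infty$ with underlying functors.

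The main obstacle I expect is making precise the claim that ``the same $1$-categorical data that produce the $\Cat_\infty^{\otimes}$-refinement automatically produce the $\Add_\infty^{\otimes}$-refinement'' — i.e. that no additional coherence is needed beyond bi-additivity of $\boxtimes_{X,X'}$. The cleanest way to handle this is probably \emph{not} to literally redo the proof of Theorem \ref{gui34tefrgeg} in the additive setting, but rather to observe that the construction in that proof is natural enough to be carried out internally to $\Add_1$: every step (forming the Grothendieck construction, equipping it with a monoidal structure, taking the symmetric monoidal localization) makes sense verbatim with $\Cat_1$ replaced by $\Add_1$ and $W_{\Cat}$ replaced by $W_{\Add}$, \emph{provided} the fiberwise tensor products respect the enrichment — and that is precisely Condition \ref{gioreg34teergegegegeg}. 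Thus the proof amounts to: (i) check that $\cF$, with its monoidal structure, is an op-fibration of additive categories over $\bC$ in the sense that fibers are additive and transition functors are exact — exactness of transition functors holds because $F$ lands in $\Add_1$, and the monoidal structure being ``additive-compatible'' is Condition \ref{gioreg34teergegegegeg}; (ii) invoke the symmetric monoidal localization of \cite{hinich} for $\Add_1^{\otimes}$ at $W_{\Add}^{\otimes}$, valid since $\otimes_{\Add}$ preserves equivalences; (iii) identify the underlying functor via the forgetful comparison to the $\Cat$-case. I would organize the actual writeup around these three checks, leaning on Theorem \ref{gui34tefrgeg} and its proof for the structural backbone so that only the additive-specific verifications (essentially just Condition \ref{gioreg34teergegegegeg} entering at the right place) need to be spelled out.
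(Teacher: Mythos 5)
Your first step coincides with the paper's: its proof of Theorem \ref{rgio34t34r34rerg} likewise starts from the morphism of $\infty$-operads $\Nerve(\bC^{\otimes})\to\Cat_{\infty}^{\otimes}$ supplied by Theorem \ref{gui34tefrgeg} (your translation of Conditions (1) and (3) into Conditions \ref{regoijrogergergf2f} and \ref{f2f3f23e23e} is correct), and Condition \ref{gioreg34teergegegegeg} enters exactly where you place it. The gap is in the promotion step. The paper's mechanism is to exhibit $\Add_{\infty}^{\otimes}$ as a \emph{suboperad} of $\Cat_{\infty}^{\otimes}$: first $\Cat_{\infty}(\coprod)^{\otimes}\subseteq\Cat_{\infty}^{\otimes}$ by \cite[Cor.\ 4.8.1.4]{HA} applied to finite coproducts, then $\Add_{\infty}^{\otimes}\subseteq\Cat_{\infty}(\coprod)^{\otimes}$ as the full suboperad on pointed categories in which finite products and coproducts agree. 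Once this is in place, producing the lift is a \emph{property} of the already-constructed operad map, verified by checking that each $F(X)$ is additive and that each $\boxtimes_{X,Y}$ preserves finite coproducts separately in each variable. Your sentence that ``a lift of $(F^u)^{\otimes}$ along $u_\infty^{\otimes}$ is determined by the $1$-categorical data already present'' asserts precisely this conclusion without supplying the mechanism: the mere existence of a lax symmetric monoidal forgetful functor does not allow you to lift operad maps along it; you need the suboperad (operadically fully faithful) statement, which is the actual content of the paper's argument.

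Your fallback --- carrying out the proof of Theorem \ref{gui34tefrgeg} ``verbatim with $\Cat_1$ replaced by $\Add_1$'' --- does not work. That proof rests on the equivalence $\Alg_{\Nerve(\bC)}(\Cat_{\infty})\simeq\Mon_{\Nerve(\bC)}(\Cat_{\infty})$ of \cite[Prop.\ 2.4.2.5]{HA} and on encoding $\Nerve(\bC)$-monoids by cocartesian fibrations, both of which are specific to the \emph{cartesian} symmetric monoidal structure on $\Cat_{\infty}$; the tensor product $\otimes_{\Add}$ is not cartesian, so the Grothendieck construction of $F$ does not directly present an algebra object of $\Add_{\infty}^{\otimes}$. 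Similarly, your parenthetical claim that the data ``gives a map $\Nerve(\bC^{\otimes})\to\Add_1^{\otimes}$'' already at the $1$-categorical level fails in general: the coherences of $\otimes_{\cF}$ over those of $\bC$ hold only up to the isomorphisms of Condition (3), not on the nose, which is exactly the $2$-categorical difficulty the fibrational setup is designed to circumvent. The first paragraph's route is the one to make precise; the second should be dropped.
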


\subsection{Proofs of Theorem \ref{gui34tefrgeg} and Theorem \ref{rgio34t34r34rerg}.}

We start with the proof of Theorem \ref{gui34tefrgeg}. 
Let  $$\Nerve(\bC^{\otimes})\to {\Nerve}(\Fin_{*})$$ denote the symmetric monoidal $\infty$-category  corresponding to 
the symmetric monoidal category $\bC$ \cite[Ex. 2.1.2.21]{HA}. Let $$\Cat^{\otimes}_{\infty}\to   {\Nerve}(\Fin_{*})$$ be the     cocartesian fibration corresponding to 
the symmetric monoidal category of small categories.
Then the $\infty$-category $$\Alg_{\Nerve(\bC)}(\Cat_{\infty}):=\mbox{\{operad maps $\Nerve(\bC^{\otimes})\to \Cat_{\infty}^{\otimes}$\}}$$ corresponds to the $\infty$-category of lax symmetric monoidal functors
$$\Nerve(\bC)\to \Cat_{\infty}\ ,$$ 
see the text after \cite[Rem. 2.1.3.6]{HA}.
We let $$\Mon_{\Nerve(\bC)}(\Cat_{\infty})$$ denote the category of $\Nerve(\bC)$-monoids in $\Cat_{\infty}$  
 \cite[Def. 2.4.2.1]{HA}. By  \cite[Prop. 2.4.2.5]{HA}, we have an equivalence
 $$\Alg_{\Nerve(\bC)}(\Cat_{\infty})\simeq \Mon_{\Nerve(\bC)}(\Cat_{\infty})\ .$$
By  \cite[Rem. 2.4.2.4]{HA}, in order to provide an object of  $\Mon_{\Nerve(\bC)}(\Cat_{\infty})$, it suffices to
present a cocartesian fibration $$p:\cC^{\otimes}\to \bC^{\otimes}$$ which exhibits
$\cC^{\otimes}$ as a   $\Nerve(\bC)$-monoidal category \cite[Rem. 2.1.2.13]{HA}.
To this end we must show that the composition
$$\cC^{\otimes}\to \bC^{\otimes}\to {\Nerve}(\Fin_{*})$$
exhibits $\cC^{\otimes}$ as an $\infty$-operad \cite[Prop. 2.1.2.12]{HA}.

Let $$\pi_{F}:\cF\to \bC$$ be a symmetric monoidal functor between $1$-categories as in the Theorem  \ref{gui34tefrgeg}. 
We get 
an induced functor of symmetric monoidal categories $$\pi_{F}^{\otimes}:\cF^{\otimes}\to \bC^{\otimes}$$ and {thus} a morphism of $\infty$-operads
 $$\Nerve(\pi^{\otimes}_{F}):\Nerve(\cF^{\otimes})\to \Nerve(\bC^{\otimes})\ .$$ Our task is then to show that 
 $\Nerve(\pi_{F}^{\otimes})$ exhibits 
$\Nerve(\cF^{\otimes})$ as an $\Nerve(\bC)$-monoidal category, and we must only check that
$\Nerve(\pi_{F}^{\otimes})$ is a cocartesian fibration. 
It suffices to check that $\pi_{F}^{\otimes}$ is an op-fibration of $1$-categories.
 
By assumption,  the underlying functor of   $\pi_{F} $ (after forgetting the symmetric monoidal structures)   arose from a Grothendieck construction for a functor $$F:\bC\to \Cat_{1}\ .$$  
Recall from \cite[Constr. 2.0.0.1]{HA} that the objects of $\bC^{\otimes}$ in the fibre  $\bC\langle n\rangle $ of $\bC^{\otimes}$ over $\langle n\rangle $ in $\Fin_{*}$ are $n$-tuples of objects of $\bC$.
Consider  two objects $$(X_{1},\dots,X_{n})\ , \quad (Y_{1},\dots,Y_{m})$$ in $\bC^{\otimes}\langle n\rangle $ and $\bC^{\otimes}\langle m\rangle $ and an object $$((X_{1},A_{1}),\dots,(X_{n},A_{n}))$$ in
$\cF^{\otimes}\langle n\rangle $, where $A_{i}$ belongs to $F(X_{i})$. Let $\alpha\langle n\rangle \to \langle m\rangle $ be a morphism in $\Fin_{*}$ and $$f: 
(X_{1},\dots,X_{n})\to (Y_{1},\dots,Y_{m})$$  be a morphism in $\bC^{\otimes}$ over $\alpha$. Then $f$ is given by   a collection of morphisms
$ f:=(f_{j})_{j\in \langle m\rangle }$  with $$f_{j}:\otimes_{i\in \alpha^{-1}(j)} X_{i}\to Y_{j}\ .$$  We must provide a cocartesian lift of $f$.  For $j$ in $\langle m\rangle$
we have a morphism 
$$g_{j}{:=}(f_{j},\id_{\boxtimes_{i\in \alpha^{-1}(j)}A_{i}}):(\otimes_{i\in \alpha^{-1}(j)} X_{i},\boxtimes_{i\in \alpha^{-1}(j)}A_{i})\to (Y_{j},F(f_{j})(\boxtimes_{i\in \alpha^{-1}(j)}A_{i}))$$ in $\cF$.
 One now checks in a straightforward (but tedious) manner that the collection
$g:=(g_{j})_{j\in \langle m\rangle }$ is the cocartesian lift of $f$. 
The argument repeatedly uses the Condition \ref{f2f3f23e23e}. This finishes the proof of Theorem \ref{gui34tefrgeg}.

 We now turn to the proof of Theorem \ref{rgio34t34r34rerg}. 
Consider the symmetric monoidal subcategory 
$$\Add_{\infty}^{\otimes}\subseteq \Cat_{\infty}^{\otimes}\ .$$

Indeed we can {first} consider the subcategory $\Cat_{\infty}(\coprod)$ of $\infty$-categories {which} admit finite coproducts and coproduct preserving functors.
By  \cite[Cor. 4.8.1.4]{HA} (applied to  the collection  {$\cK$} of finite sets) we get a symmetric monoidal subcategory
$$\Cat_{\infty}(\coprod)^{\otimes}\to  \Cat_{\infty}^{\otimes}\ .$$
In the next step we view
$\Add_{\infty}$ as a full subcategory of $\Cat_{\infty}(\coprod) $ of pointed $1$-categories in which products and coproducts coincide. Using  \cite[Cor. 2.2.1.1]{HA} one then shows that
$$\Add_{\infty}^{\otimes}\to  \Cat_{\infty}(\coprod)^{\otimes}$$
is again a suboperad.
We now consider the diagram
$$\xymatrix{\Add_{\infty}^{\otimes}\ar[r]& \Cat_{\infty}(\coprod)^{\otimes}\ar[d]\\\Nerve(\bC^{\otimes})\ar[r]\ar@{-->}[u]\ar@{..>}[ur]&\Cat_{\infty}^{\otimes}}\ .$$
The lower horizontal map is a morphism of {$\infty$-}operads by Theorem \ref{gui34tefrgeg}. 
We first argue that the dotted lift exists. To this end we use  \cite[Notation 4.8.1.2]{HA}. One must check that
 $F$ takes values in categories admitting finite coproducts (clear), and that the functors
$$\boxtimes_{X,Y}:F(X)\times F(Y)\to F(X\times Y)$$ preserves 
sums in both variables separately, i.e., Assumption \ref{gioreg34teergegegegeg}.
Finally,  for the dashed arrow we use that 
$F$ takes values in $\Add_{{1}}$.

\section{The symmetric monoidal functor of controlled objects}

\subsection{Symmetric monoidal structures}\label{ewfoiewfwefwefwef}

In this subsection we write out, for later reference, the structures of a symmetric monoidal category and of a (lax) symmetric monoidal functor.
Let $\bC$ be a $1$-category: \begin{ddd}\label{gui23r32reger} {\cite[Sec. VII. 1. \& 7.]{maclane}}
A symmetric monoidal structure on $\bC$ is given by the following data:
\begin{enumerate}
\item\label{gui23r32reger1} a bifunctor $(-\otimes_{\bC}-):\bC\times \bC\to \bC$,
\item\label{gui23r32reger2} an object $1_{\bC}$ (the tensor unit),
\item a natural isomorphism  (the associativity constraint) $$\alpha^{\bC}:(-\otimes_{\bC}-)\circ ((-\otimes_{\bC}-) \times  \id_{\bC} )  \to (-\otimes_{\bC}-)\circ  (\id_{\bC}\times (-\otimes_{\bC}-))   \ ,$$
\item a  natural isomorphism $\eta^{\bC}:1_{\bC}\otimes_{\bC}-\to \id_{\bC}$ (the unit constraint),
\item a  natural isomorphism (the symmetry) $\sigma^{\bC}:(-\otimes_{\bC}-)\circ T\to (-\otimes_{\bC}-)$, where $T:\bC\times \bC\to \bC\times \bC$ is the flip functor.
\end{enumerate}
This data have to satisfy the following relations:
\begin{enumerate}
\item the pentagon relation,
\item  the triangle relation,
\item the inverse relation, 
\item the associativity coherence. 
\end{enumerate}
\end{ddd}
A symmetric monoidal category is a category {equipped} with a symmetric monoidal structure. 

We will use the name of the category as a superscript for the constraints, but if we evaluate e.g. the symmetry constraint $\sigma^{\bC}$ at the objects $C,C^{\prime}$ of $\bC$, then we write shortly $\sigma_{C,C^{\prime}}$ instead of $\sigma^{\bC}_{C,C^{\prime}}$ since the type of objects in the subscript already determines the category in question.

Let $\bC$ and $\bD$ be   symmetric monoidal categories,  and let $F:\bC\to \bD$ be a functor.
\begin{ddd}\label{34f8924fregerg} {\cite[Sec. XI. 2.]{maclane}}
A symmetric monoidal structure on $F$ is given by the following data:
\begin{enumerate}
\item\label{3gio34t3434g} an isomorphism  $\epsilon^{F}:1_{\bD}\to F(1_{\bC})$,
\item\label{3gio34t3434g1} a natural isomorphism $\mu^{F}:(- \otimes_{\bD}-)\circ (F\times F)\to F\circ ( -\otimes_{\bC}-)$.
\end{enumerate}
This data have to satisfy the following relations:
\begin{enumerate}
\item associativity relation,

\item\label{fiowfewef23rr3} unitality relation, 
\item\label{evfiwehfioewfewfewfewff} symmetry relation. 
\end{enumerate}

\end{ddd}

\begin{rem}\label{gir2i23joijoir23r23r}
If we weaken the {assumptions} and {we} only require   that $\epsilon^{F}$ and $\mu^{F}$ are
natural transformations, then we get the definition of a lax symmetric monoidal functor. 
\end{rem}

%
%

\subsection{Bornological coarse spaces }\label{geriogjheroigfwefwefw}

In this subsection we recall the definition {of} the symmetric monoidal category $G\BC$ of $G$-bornological coarse spaces \cite[Sec. 2]{buen}, \cite[Sec. 2.1]{equicoarse}.

In the  definitions below we will use the following {notation:}
\begin{enumerate}
\item For a set $Z$ we let $\cP(Z)$ denote the power set of $Z$. 
\item If  a group $G$ acts on a set $X$, then it acts diagonally on $X\times X$ and therefore on $\cP(X\times X)$.
For $U$ in $\cP(X\times X)$ we set \begin{equation*}\label{betrboklvree}
GU:=\bigcup_{g\in G} gU\ .
\end{equation*} 
\item For $U$ in $\cP(X\times X)$ and $B$ in $\cP(X)$ we define the
 $U$-thickening $U[B]$   by
\begin{equation}\label{viojoewfwefvvdvsdv} U[B]:=\{x\in X \mid \exists y\in B: (x,y) \in U \}\ .\end{equation}
\item For $U$ in $\cP(X\times X)$ we define {its} inverse  by $$U^{-1}:=\{(y,x) \mid (x,y)\in U  \}\ .$$  \item For $U,V$ in $\cP(X\times X)$ we define their composition  by \begin{equation}\label{rv3roih43iuoff3fwe}
U\circ V:=\{(x,z) \mid \exists y\in X : (x,y)\in U \wedge (y,z)\in V  \}\ .
\end{equation}
 
\end{enumerate}

Let $G$ be a group and let $X$ be a $G$-set.

\begin{ddd}
 A {$G$-coarse structure} $\cC$ on $X$ is a subset of $\cP(X\times X)$ with the following properties:
\begin{enumerate}
\item $\cC$ is closed under composition, inversion, and forming finite unions or subsets.
\item $\cC$ contains the diagonal $\diag(X)$ of $X$.
\item {For} every $U$ in $\cC$, the set $GU$
is also in $\cC$. 
\end{enumerate}
The pair $(X,\cC)$ is called a  {$G$-coarse space},
 {and the members of $\cC$ are called (coarse)  {entourages} of $X$.}
\end{ddd}

Let $(X,\cC)$ and $(X^{\prime},\cC^{\prime})$ be $G$-coarse spaces and  {let} $f\colon X\to X^{\prime}$ be an equivariant map between the underlying sets.
\begin{ddd}
The map $f$ is  {controlled} if for every $U$ in $\cC$ we have  $(f\times f)(U)\in \cC^{\prime}$.  \end{ddd}

We obtain a category $G\Coarse$ of $G$-coarse spaces and controlled equivariant maps.

Let $G$ be a group and let $X$ be a $G$-set.
\begin{ddd}\label{erjgoijgoiergjergegrrr}A  {$G$-bornology} $\cB$ on  $X$ is a subset of $\cP(X)$ with the following properties:
\begin{enumerate}
\item $\cB$ is closed under {forming} finite unions and subsets.
\item $\cB$ contains all finite subsets of $X$.
\item $\cB$ is $G$-invariant. 
\end{enumerate} 
The pair $(X,\cB)$ is called a  {$G$-bornological  space}{, and the members of $\cB$ are called  {bounded subsets} of $X$.}
\end{ddd}

Let $(X, {\cB})$ and $(X^{\prime}, {\cB^{\prime}})$ be $G$-bornological spaces and {let} $f \colon X\to X^{\prime}$ be an equivariant map between the underlying sets.

\begin{ddd}\label{erjgoijgoiergjergegrrr1}
 {The map} $f$ is {proper} if for every $B^{\prime}$ in $\cB^{\prime}$ we have $f^{-1}(B^{\prime})\in  \cB$.   \end{ddd}
 
We obtain a category $G\Born$ of $G$-bornological spaces and proper equivariant maps.

Let $X$ be a $G$-set {equipped} with a $G$-coarse structure $\cC$ and a $G$-bornology $\cB$.
\begin{ddd}
The coarse structure $\cC$ and the bornology $\cB$ are said to be  
  {compatible} if 
for every $B$ in $\cB$ and $U$ in $\cC$ 
 {the $U$-thickening $U[B]$ (see \eqref{viojoewfwefvvdvsdv}) lies in $\cB$.}
\end{ddd}

\begin{ddd}\label{ergergregreg5t45t}
A  {$G$-bornological coarse space} is a triple $(X,\cC,\cB)$ consisting of a $G$-set $X$, a $G$-coarse structure $\cC$ and a $G$-bornology $\cB$ {on $X$,} such that $\cC$ and $\cB$  are  compatible. \end{ddd}

Usually we will denote a $G$-bornological coarse space by the symbol $X$ and write $\cB(X)$ and $\cC(X)$ for its bornology and coarse structures.


\begin{ddd}
A  {morphism} $f:X \to X^{\prime} $ between $G$-bornological coarse spaces
is an equivariant  map  of the {underlying} $G$-sets {that} is controlled and proper.
\end{ddd}

We obtain a category $G\BC$ of $G$-bornological coarse spaces and morphisms.

Next we describe the symmetric monoidal structure on $G\BC$ \cite[Ex. 2.17]{equicoarse}. We have a forgetful functor
$$U:G\BC\to G\Set$$ which associates to every $G$-bornological coarse space $X$ {its} underlying $G$-set. 
This functor is faithful. 
The category $G\Set$ {is endowed with} the cartesian symmetric monoidal structure.  The symmetric monoidal structure on $G\BC$ will be defined {in such a way}  that the functor $U$ preserves the unit and the tensor product strictly, i.e., the morphisms  \ref{3gio34t3434g} and \ref{3gio34t3434g1} in Definition \ref{34f8924fregerg} are identities. In other words, the associator, unit and symmetry constraints are imported from $G\Set$ and 
satisfy the relations required in Definition \ref{gui23r32reger} automatically. 

We start with the description of the bifunctor
$$-\otimes_{G\BC}-:G\BC\times G\BC\to G\BC\ .$$
Let $X $ and  $ X^{\prime} $ be two $G$-bornological coarse spaces. Then
their tensor product $$ X\otimes_{G\BC}  X^{\prime} $$  is the $G$-bornological coarse spaces defined as follows:
\begin{enumerate}
\item The underlying $G$-set of $X\otimes_{G\BC}  X^{\prime}$ is the cartesian product of the underlying $G$-sets $X\times X^{\prime}$.  
\item The $G$-bornology on $X\times X^{\prime}$ is generated by the subsets $B\times B^{\prime}$ for all $B$ in $\cB(X)$ and $B^{\prime}$ in $\cB(X^{\prime})$.
\item The $G$-coarse structure on $X\times X^{\prime}$ is generated by the entourages $U\times U^{\prime}$ for $U$ in $\cC(X)$ and $U^{\prime}$ in $\cC(X^{\prime})$.
\end{enumerate}
Here a $G$-bornological (or coarse, respectivley) structure generated by a family of   subsets (or entourages)  is the minimal $G$-bornological (or $G$-coarse) structure containing these subsets (or entourages). 
Note that the underlying $G$-coarse space of the tensor product  represents the cartesian product of the underlying $G$-coarse spaces of the factors in $G\Coarse$, but the tensor product is not the cartesian product in $G\BC$ in general.  

From now on we will use the shorter notation $X\otimes X^{\prime}$ for the tensor product {of $G$-bornological coarse spaces}, i.e., we omit the subscript {$G\BC$.} 

If $f:X\to Y$ and $f^{\prime}:X^{\prime}\to Y^{\prime}$ are morphisms of $G$-bornological coarse spaces, then their tensor product
$$f\otimes f^{\prime}:X\otimes Y\to X^{\prime}\otimes Y^{\prime}$$ is induced by {the}  equivariant map of underlying $G$-sets $(x,y)\mapsto (f(x),f(y))$.
This finishes the description of the bifunctor \ref{gui23r32reger}.\ref{gui23r32reger1}

The tensor unit $1_{G\BC}$ (\ref{gui23r32reger}.\ref{gui23r32reger2})   is given  by the one-point space $*$. 

As explained above, the associativity, unit and symmetry constraints are imported from $G\Set$. It is straightforward to check that they are implemented by  morphisms of $G$-bornological coarse spaces.

This finishes the description of the symmetric monoidal structure $\otimes$ on the category  $G\BC$.

\subsection{Controlled objects}

In this section, for every additive category $\bA$ with a strict $G$-action, we describe the functor $$\bV_{\bA}^{G}:G\BC\to \Add_{1}$$ which sends a $G$-bornological coarse space $X$ to its additive category $\bV_{\bA}^{G}{(X)}$ of equivariant $X$-controlled $\bA$-object \cite[Sec.~8.2]{equicoarse}.
 
{For a group $G$, l}et $BG$ be the category with one object $*$ and $\End_{BG}(*)\cong G$. Then  $\Fun(BG,\Add_{1})$ is the category of additive categories with {a} strict $G$-action. Explicitly, an additive category with a strict $G$-action is an additive category $\bA$ (the evaluation of the functor at the object $*$ in $BG$)  together with an action of $G$ on $\bA$ by exact functors, which is strictly associative. Our notation   for the action of $g$ in $G$ on objects $A$ {of $\bA$}  and morphisms $f$ is
$$(g,A)\mapsto gA\ , \quad (g,f:A\to A^{\prime})\mapsto (gf:gA\to gA^{\prime})\ .$$

  Let $\bA$ be an additive category with a strict $G$-action and $X$ be a $G$-bornological coarse space. We consider the bornology  $\cB(X)$ of $X$ as a poset with a $G$-action $(g,B)\mapsto gB$,  hence as a category with a strict $G$-action, i.e., an object of  $\Fun(BG,\Cat_{1})$.

The category $\Fun(\cB(X),\bA)$ has an induced $G$-action which can explicitly be described as follows. If $M:\cB\to \bA$ is a functor and $g$ is an element of $G$, then  $g M\colon \cB(X)\to \bA$ is  the functor which sends a bounded set $B$ in $\cB(X)$ to the object $gM(g^{-1}(B))$ of $\bA$.
If $\rho:M\to  M^{\prime}$ is a natural transformation between two such functors, then we let  $g\rho :gM\to gM^{\prime}$  denote the canonically induced natural transformation.

\begin{ddd}\label{def:Xcontrolledobject}{\cite[Def. 8.3]{equicoarse}}
	An {equivariant $X$-controlled $\bA$-object} is a pair $(M,\rho)$ consisting of a functor
	$M \colon \cB(X) \to \bA$
	and a family $\rho=(\rho(g))_{g\in G}$ of natural isomorphisms 
	$\rho(g)\colon M \to g M$
	 	satisfying the following conditions:
	\begin{enumerate}
		\item\label{def:Xcontrolledobject:it1} $M(\emptyset) \cong 0$.
		\item\label{def:Xcontrolledobject:it3} For all $B, B'$ in $\cB(X)$, the commutative square
		\[\xymatrix{
			M(B \cap B')\ar[r]\ar[d] & M(B)\ar[d] \\
			M(B')\ar[r] & M(B \cup B')
		}\]
		is a pushout square.
		\item\label{def:Xcontrolledobject:it4} For all $B$ in $\cB(X)$ there exists a finite subset $F$ of $B$ such that the inclusion $F\to B$ induces an isomorphism   $M(F) \xrightarrow{\cong} M(B)$. 		\item \label{def:Xcontrolledobject:it5}For all pairs of elements  $g,g'$ of $G$ we have the relation
		$\rho(g g^{\prime})= g\rho(g')\circ \rho(g)$.
		\qedhere
	\end{enumerate}
\end{ddd}

If $U$ is an invariant coarse entourage of $X$, i.e.,  an element of $\cC(X)^{G}$, then we get a $G$-equivariant functor
$$U[-]:\cB(X)\to \cB(X)$$ which sends a bounded
 subset $B$ of $X$ to its  $U$-thickening   $U[B]$, see \eqref{viojoewfwefvvdvsdv}. Indeed,  the $U$-thickening   $U[B]$ of a bounded subset $B$    is again bounded by the compatibility of the coarse structure $\cC(X)$ and the bornology $\cB(X)$ of $X$, and $U[-]$ preserves the inclusion relation. Since  $U$  is $G$-invariant we have   the equality  $U[gB]=gU[B]$. It implies that $U[-]$  is $G$-{equivariant}. 
 If $M:\cB(X)\to \bA$ is a functor, then we write $U[-]^{*}M:=M\circ U[-]$ for the pull-back of $M$ along $U[-]$.

Let $(M,\rho), (M',\rho')$ be two equivariant $X$-controlled $\bA$-objects and $U$ be  an invariant coarse entourage of $X$.
\begin{ddd}\label{rebgieorgergergegerg}
	An {equivariant $U$-controlled morphism} $\phi  \colon (M,\rho) \to (M',\rho')$ is a natural transformation
	\[ \phi \colon M  \to U[-]^{*}M' \ ,\] 
	such that $\rho'(g)\circ \phi =(g \phi )\circ \rho(g)$ for all elements $g$ of $G$.
\end{ddd}
We let $\Mor_U((M,\rho),(M',\rho'))$ denote the abelian group of equivariant $U$-controlled morphisms.

If $U^{\prime}$ is in $\cC(X)^{G}$ such that $U\subseteq U^{\prime}$, then for every $B$ in $\cB(X)$ we have $U[B]\subseteq U^{\prime}[B]$. These inclusions induce a transformation between functors
$U[-]^{*}M^{\prime} \to U^{\prime}[-]^{*} M^{\prime} $ and therefore a map 
$$\Mor_U((M,\rho),(M',\rho'))\to \Mor_{U^{\prime}}((M ,\rho),(M^{\prime},\rho'))$$
by postcomposition. Using these maps in the interpretation of the colimit
 we define the abelian group of equivariant controlled morphisms from $(M,\rho)$ to $(M',\rho^{\prime})$ by
\[ \Hom_{\bV^{G}_\bA(X)}((M,\rho),(M',\rho')) := \colim_{U \in \cC(X)^{G}} \Mor_U((M,\rho),(M',\rho'))\ .\]

We now consider a pair  of morphisms in $$ \Hom_{\bV^{G}_\bA(X)}((M,\rho),(M',\rho'))\:\:\mbox{and}\:\:  \Hom_{\bV^{G}_\bA(X)}((M^{\prime},\rho^{\prime}),(M^{\prime\prime},\rho^{\prime\prime}))\ ,$$ respectively, 
 which are represented by 
  $$\phi :M \to U[-]^{*}M^{\prime} \:\:\mbox{and}\:\: \phi ^{\prime}:M^{\prime} \to U^{\prime}[-]^{*}M^{\prime\prime} \ .$$ 
We define the composition of the two morphisms  to be represented by the morphism
$$U[-]^{*}\phi ^{\prime}\circ \phi : M\to (U^{\prime}\circ U)[-]^{*}M^{\prime\prime} \ , $$ (see \eqref{rv3roih43iuoff3fwe} for notation)
where
$$U[-]^{*}\phi^{\prime}:U[-]^{*}M^{\prime} \to (U^{\prime}\circ U)[-]^{*}M^{\prime\prime} $$ is defined in the canonical manner.
We denote the resulting   category of equivariant $X$-controlled $\bA$-objects and equivariant controlled morphisms by $\bV_\bA^G(X)$. This category is additive {\cite[Lemma 8.7]{equicoarse}}.


Let $f \colon X  \to X'  $  be a morphism of $G$-bornological coarse spaces,
and let $(M,\rho)$ be an equivariant $X$-controlled $\bA$-object.
Since $f$ is proper, it induces an equivariant functor $f^{-1}:\cB(X^{\prime})\to \cB(X)$, and we can define a functor $f_*M \colon \cB(X^{\prime}) \to \bA$ by
\[ f_*M := M\circ f^{-1} \ .\]
Furthermore, we define
\[f_*\rho(g) :=\rho(g)\circ f^{-1} \ .\]
Let $U$ be in $\cC(X)^{G}$ and let $\phi \colon (M,\rho) \to (M',\rho')$ be an equivariant $U$-controlled morphism. Then  $V:= (f \times f)(U)$ belongs to $\cC(X^{\prime})^{ G}$ and we have  $U[f^{-1}(B^{\prime})] \subseteq f^{-1}(V[B^{\prime}])$ for all bounded subsets $B^{\prime}$ of $X^{\prime}$. Therefore, we obtain an induced $V$-controlled morphism
\[ f_*\phi = \{  f_*M(B^{\prime})\stackrel{\phi_{f^{-1}(B^{\prime})}}{ \to} M(U[f^{-1}(B^{\prime})])\to  f_*M(V[B^{\prime}]) \}_{B^{\prime} \in \cB(X')}\ .\]
One checks that this construction defines an additive functor
\[ f_* \colon \bV^G_\bA(X) \to \bV^G_\bA(X')\ .\]
This completes the construction of the functor  \begin{equation}\label{ekrjekrferferferf}
\bV^G_\bA \colon G\BC \to \Add_{{1}}\ .
\end{equation}

In the following we give a more explicit description of the objects and morphisms in $\bV_{\bA}^{G}(X)$ which will be used in the description of the  symmetric monoidal structure on the Grothendieck construction {associated to the functor} $\bV_{\bA}^{G}$ in Section \ref{vevgebvveerverv}.

\begin{conv}\label{choosesum}
	We consider an additive category  $\bA$. If $(A_{i})_{i\in I}$ is a  family of objects of $\bA$ with at most finitely many non-zero members, then we use the symbol
	$\bigoplus_{i\in I} A_{i}$ in order to 
	denote a choice of an object of $\bA$  {together}  with a family of morphisms $(A_{j}\to  \bigoplus_{i\in I} A_{i})_{j\in I}$ {representing} the coproduct of the family.

Since in an additive category coproducts and products coincide, for every $j$ in $I$ we  furthermore have a   canonical projection   $$ \bigoplus_{i\in I} A_{i} \to A_{j}$$  
such that the diagram
 $$\xymatrix{A_{j}\ar[r]\ar@/^0.7cm/[rr]^{{\id_{A_{j}}}}&\bigoplus_{i\in I} A_{i} \ar[r]&A_{j}}$$
 commutes.

If $(A_{i^{\prime}}^{\prime})_{i^{\prime}\in I^{\prime}}$ is a second   family of this type 
and $(\phi_{i,i^{\prime} }:A_{i^{\prime}}^{\prime}\to A_{i})_{i^{\prime}\in I^{\prime},i\in I}$ is a family of morphisms in $\bA$, then we have a
unique  morphism $\oplus \phi_{i,i^{\prime}}$   such that the squares \begin{equation}\label{vejebvierhviervervev}
\xymatrix{A_{i^{\prime}}^{\prime}\ar[r]^{\phi_{i,i^{\prime}}}\ar[d]&A_{i}\ar[d]\\\bigoplus_{i^{\prime}\in I^{\prime}} A_{i^{\prime}}^{\prime} \ar[r]^{\oplus \phi_{i,i^{\prime}}}&\bigoplus_{i\in I} A_{i}}
\end{equation} 
commute for every $i^{\prime}$ in $I^{\prime}$ and $i$ in $I$.\hB 
\end{conv}
%

%

{Let $\bA$ be a small additive category with strict $G$-action.}
Let $X  $ be a  $G$-bornological coarse space  (see Definition \ref{ergergregreg5t45t}),  and let $(M,\rho)$ be an equivariant $X$-controlled $\bA$-object (see Definition \ref{def:Xcontrolledobject}). Let $B$ be  in $\cB(X)$ and $x$ be a point in $B$. The inclusion $\{x\}\to B$ induces a morphism
$M(\{x\})\to M(B)$ in $\bA$. The 
conditions \ref{def:Xcontrolledobject}.\ref{def:Xcontrolledobject:it1} and \ref{def:Xcontrolledobject}.\ref{def:Xcontrolledobject:it3}
together imply that
$M(\{x\})=0$ for all but finitely many points of $B$, and that the canonical morphism (induced by the universal property of the coproduct in $\bA$) 
\begin{equation}\label{g5giojgoigg5g45gfg}
\bigoplus_{x\in B} M(\{x\}) \xrightarrow{\cong} M(B)\ 
\end{equation}   
{is an isomorphism.}

Let now $U$ be in $\cC(X)^{G}$, and let $\phi:(M,\rho)\to (M^{\prime},\rho^{\prime})$ be an equivariant  $U$-controlled morphism. By Definition \ref{rebgieorgergergegerg}, {the morphism $\phi$} is given by   a natural transformation of functors $\phi:M\to U[-]^{*}M^{\prime}$ satisfying an equivariance condition.
For every point $x$ in $X$ we get a morphism
\begin{equation}\label{dewd23d2d3d2}
M(\{x\})\to M^{\prime}(U[\{x\}])\stackrel{\eqref{g5giojgoigg5g45gfg}}{\cong}
\bigoplus_{x^{\prime}\in U[\{x\}]} M(\{x^{\prime}\})\ 
\end{equation} 
{in $\bA$.}
We let\begin{equation}\label{vrtvoijogi43ferv}
\phi_{x^{\prime},x}:M(\{x\})\to M^{\prime}(\{x^{\prime}\})
\end{equation}   denote the composition of  {\eqref{dewd23d2d3d2}} with the projection onto the summand corresponding to $x^{\prime}$.
In this way we get a family of morphisms $(\phi_{x^{\prime},x})_{x^{\prime},x\in X}$ in $\bA$. In a similar manner, for $g$ in $G$, the transformation $\rho(g):M\to gM$ gives rise to a family of morphisms
\begin{equation}\label{v4rr4vhu34fr3f}
\big(\rho(g)_{x}:M(\{x\})\to gM(\{g^{-1}x\})\big)_{x\in X}\ .
\end{equation} 
By construction the family    $(\phi_{x^{\prime},x})_{x^{\prime},x\in X}$ satisfies the following conditions. 
\begin{enumerate}
	\item\label{efghewufw23} For all $x,x^{\prime}$ in $X$ the condition
	$\phi_{x^{\prime},x}\not=0$ implies that  $(x^{\prime},x )\in U$.  \item\label{efghewufw232} We have  
	$\rho^{\prime}(g)_{x^{\prime}}\circ \phi_{x^{\prime},x}=
	(g\phi)_{g^{-1}x^{\prime},g^{-1}x}\circ \rho(g)_{x}$ 
	for all $x,x^{\prime}$ in $X$ and $g$ in $G$.
\end{enumerate}

\begin{lem}\label{fiofhoiwefhwef32r} We have  a bijection between equivariant $U$-controlled morphisms $\phi:(M,\rho)\to (M^{\prime},\rho^{\prime})$
	and families
	$(\phi_{x^{\prime},x})_{x^{\prime},x\in X}$ of morphisms as in \eqref{vrtvoijogi43ferv} satisfying the {C}onditions  \ref{efghewufw23} and \ref{efghewufw232}. 
	
	%
	%
	%
\end{lem}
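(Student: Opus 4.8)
The plan is to establish the bijection by constructing explicit inverse maps and checking they are mutually inverse. In one direction, an equivariant $U$-controlled morphism $\phi\colon M\to U[-]^*M'$ produces the family $(\phi_{x',x})$ via \eqref{dewd23d2d3d2} and \eqref{vrtvoijogi43ferv}, and the preceding discussion already records that this family satisfies Conditions \ref{efghewufw23} and \ref{efghewufw232}. So the real work is the reverse direction: given a family $(\phi_{x',x})_{x',x\in X}$ subject to these two conditions, I would reconstruct a natural transformation $\phi\colon M\to U[-]^*M'$ and verify the equivariance condition of Definition \ref{rebgieorgergergegerg}.

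First I would fix $B$ in $\cB(X)$. By Condition \ref{efghewufw23}, for each $x\in B$ only finitely many $x'$ have $\phi_{x',x}\neq 0$, and all such $x'$ lie in $U[\{x\}]\subseteq U[B]$; since $U[B]$ is bounded, the isomorphism \eqref{g5giojgoigg5g45gfg} applies to both $M(B)=\bigoplus_{x\in B}M(\{x\})$ and $M'(U[B])=\bigoplus_{x'\in U[B]}M'(\{x'\})$. I would then define $\phi_B\colon M(B)\to M'(U[B])$ using Convention \ref{choosesum}: on the summand $M(\{x\})$ it is the map with components $(\phi_{x',x})_{x'\in U[B]}$ into the coproduct $\bigoplus_{x'\in U[B]}M'(\{x'\})$, which makes sense because only finitely many components are nonzero. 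Next I would check naturality in $B$, i.e.\ that for an inclusion $B\subseteq B'$ the square relating $\phi_B$, $\phi_{B'}$, $M(B)\to M(B')$ and $M'(U[B])\to M'(U[B'])$ commutes; this is immediate from the compatibility of the decompositions \eqref{g5giojgoigg5g45gfg} with inclusions, together with the fact that the components $\phi_{x',x}$ do not depend on $B$. Then the equivariance condition $\rho'(g)\circ\phi=(g\phi)\circ\rho(g)$ reduces, after passing to the pointwise decompositions and using \eqref{v4rr4vhu34fr3f}, exactly to Condition \ref{efghewufw232}.

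Finally I would verify that the two constructions are mutually inverse. Starting from $\phi$, extracting $(\phi_{x',x})$, and rebuilding $\widetilde\phi$ gives back $\phi$ because $\phi_B$ is determined by its components on the summands $M(\{x\})$ and the universal property of the coproduct forces $\widetilde\phi_B=\phi_B$; conversely, starting from a family, building $\phi$, and extracting its components returns the original family, since the composition \eqref{dewd23d2d3d2} followed by the projection to the $x'$-summand recovers precisely $\phi_{x',x}$ by construction.

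The main obstacle is bookkeeping rather than conceptual: one must be careful that the finiteness built into Condition \ref{efghewufw23} is genuinely what is needed to make the infinite coproduct $\bigoplus_{x'\in U[B]}M'(\{x'\})$ behave like a finite one (so that a morphism into it is determined by its components), and that all the identifications \eqref{g5giojgoigg5g45gfg} are used coherently across the different bounded sets appearing. I expect the naturality-in-$B$ check and the equivariance check to be the places where the argument, though routine, requires the most attention to detail; neither presents a real difficulty given Conditions \ref{efghewufw23} and \ref{efghewufw232} and the structure of $X$-controlled objects from Definition \ref{def:Xcontrolledobject}.
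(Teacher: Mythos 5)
Your proposal is correct and follows essentially the same route as the paper: the forward direction is read off from the preceding discussion, and the reverse direction defines $\phi_{B}$ on each bounded set $B$ via Convention \ref{choosesum} and the decomposition \eqref{g5giojgoigg5g45gfg}, with naturality, $U$-control, and equivariance (from Condition \ref{efghewufw232}) checked as routine verifications. Your explicit check that the two constructions are mutually inverse is left implicit in the paper but adds nothing beyond what the universal property of the coproduct already guarantees.
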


\begin{proof}
Let $(M,\rho)$  and $(M^{\prime},\rho^{\prime})$ be in $\bV_{\bA}^{G}(X)$.
We must show that a matrix $(\phi_{x^{\prime},x})_{x^{\prime},x\in X}$ of morphisms 
 as in \eqref{vrtvoijogi43ferv} which satisfies  the {C}onditions  \ref{efghewufw23} and \ref{efghewufw232}
 gives rise to a{n equivariant controlled} morphism $\phi:(M,\rho)\to (M^{\prime},\rho^{\prime})$. Let $U$ be in $\cC^{G}(X)$ such that {C}ondition  \ref{efghewufw23} holds true. We must construct an equivariant  natural transformation
 $\phi:M\to U[-]^{*}M^{\prime}$. 
 
 We consider
 $B$ in $\cB(X)$. Then
 $(M(\{x\})_{x\in B}$ and $(M^{\prime}(\{x^{\prime}\}))_{x^{\prime}\in U[B]}$ are families of objects in $\bA$ with at most finitely many non-zero members.
 Using Convention \ref{choosesum},  and in particular  the notation from \eqref{vejebvierhviervervev},
  we  can define the morphism
 $\phi_{B}:M(B)\to M^{\prime}(U[B])$ such that
 $$\xymatrix{\bigoplus_{x\in B}M(\{x\})\ar[rr]^{\oplus \phi_{x^{\prime},x}}\ar[d]^{\eqref{g5giojgoigg5g45gfg}}_{\cong}&&\bigoplus_{x^{\prime}\in U[B]}M^{\prime}(\{x^{\prime}\})\ar[d]^{\eqref{g5giojgoigg5g45gfg}}_{\cong}\\M(B)\ar[rr]^{\phi_{B}}&&M^{\prime}(U[B])}$$
 commutes. It is now straightforward to check that the family $(\phi_{B})_{B\in \cB(X)}$ assembles to a natural transformation $\phi:M\to U[-]^{*}M^{\prime}$ as required. By construction the morphism $\phi$ is $U$-controlled.  Furthermore, the Condition \ref{efghewufw232} implies that $\phi$ satisfies the equivariance condition stated in {Definition} \ref{rebgieorgergergegerg}. \end{proof}

Let ${f}:X_{0}\to X_{1}$ be a morphism of $G$-bornological coarse spaces and $(M_{i},\rho_{i})$ be  objects of $\bV_{\bA}^{G}(X_{i})$ for $i=0,1$.
Then a morphism \begin{equation}\label{f3oifjiocvcwecw}
\phi:f_{*}(M_{0},\rho_{0})\to (M_{1},\rho_{0})
\end{equation}
induces a  matrix \begin{equation}\label{f34flk3n4fok34f34f}
\left(\phi^{f}_{x_{1},x_{0}}:M_{0}(\{x_{0}\})\to M_{1}(\{x_{1}\})\right)_{x_{0}\in X_{0}, x_{1}\in X_{1}} \ .
\end{equation}
To this end we observe that
$$(f_{*}M_{0})(\{x^{\prime}_{1}\})=M_{0}(f^{-1}(\{x_{1}^{\prime}\}))\stackrel{\eqref{g5giojgoigg5g45gfg}}{\cong} \bigoplus_{x_{0}\in f^{-1}(\{x_{1}^{\prime}\})} M_{0}(\{x_{0}\})$$ so that
$$\left(\phi_{x_{1} ,x_{1}^{\prime}}:=\oplus_{x_{0}\in f^{-1}(\{x_{1}^{\prime}\})} \phi^{f}_{x_{0},x_{1}}:M_{0}(f^{-1}(\{x_{1}^{\prime}\}))\cong\bigoplus_{x_{0}\in f^{-1}(\{x_{1}^{\prime}\})} M_{0}(\{x_{0}\})
 \to M_{1}(\{x_{1}\})\right)_{x_{1}^{\prime},x_{1}\in X_{1}}$$
is the matrix representing $\phi$ according to Lemma \ref{fiofhoiwefhwef32r}.  As a consequence of  Lemma \ref{fiofhoiwefhwef32r} we obtain:
\begin{kor}\label{fuewihfiuwehfewfewfwef}A matrix \eqref{f34flk3n4fok34f34f}  represents a morphism \eqref{f3oifjiocvcwecw}
 iff the following conditions are satisfied: 
\begin{enumerate}
\item There exists an entourage $U_{1}$ in $\cC( X_{1})$ such that for every $x_{0}$ in $X_{0}$ and $x_{1}$ in $X_{1}$ the condition
$\phi^{f}_{x_{1},x_{0}}\not=0$ implies that $(x_{1},f(x_{0}) )\in U_{1}$.
\item For every $g$ in $G$ we have the equality
$$\rho_{1} (g)_{x_{1}}\circ \phi^{f}_{x_{1} ,x_{0}}=
	(g\phi^{f})_{g^{-1}x_{1},g^{-1}x_{0}}\circ \rho(g)_{x_{0}} 
\ .$$
\end{enumerate}
\end{kor}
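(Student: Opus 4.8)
The plan is to reduce Corollary \ref{fuewihfiuwehfewfewfwef} to Lemma \ref{fiofhoiwefhwef32r} by unwinding the definition of the functor $f_{*}$. Recall that a morphism \eqref{f3oifjiocvcwecw} is, by definition, an equivariant controlled morphism $\phi\colon f_{*}(M_{0},\rho_{0})\to (M_{1},\rho_{1})$ in $\bV_{\bA}^{G}(X_{1})$, hence by Lemma \ref{fiofhoiwefhwef32r} is equivalently encoded by a matrix $(\phi_{x_{1}^{\prime},x_{1}})_{x_{1}^{\prime},x_{1}\in X_{1}}$ of morphisms $(f_{*}M_{0})(\{x_{1}^{\prime}\})\to M_{1}(\{x_{1}\})$ satisfying Conditions \ref{efghewufw23} and \ref{efghewufw232} relative to some invariant entourage $V$ of $X_{1}$. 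First I would use the isomorphism \eqref{g5giojgoigg5g45gfg} applied to the bounded set $f^{-1}(\{x_{1}^{\prime}\})$, namely $(f_{*}M_{0})(\{x_{1}^{\prime}\})=M_{0}(f^{-1}(\{x_{1}^{\prime}\}))\cong\bigoplus_{x_{0}\in f^{-1}(\{x_{1}^{\prime}\})}M_{0}(\{x_{0}\})$, to see that such a matrix entry $\phi_{x_{1}^{\prime},x_{1}}$ is the same datum as a family $(\phi^{f}_{x_{1},x_{0}})_{x_{0}\in f^{-1}(\{x_{1}^{\prime}\})}$ of morphisms $M_{0}(\{x_{0}\})\to M_{1}(\{x_{1}\})$; letting $x_{1}^{\prime}$ range over $X_{1}$ this assembles to exactly a matrix of the form \eqref{f34flk3n4fok34f34f}. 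This sets up a bijection between matrices \eqref{f34flk3n4fok34f34f} and matrices $(\phi_{x_{1}^{\prime},x_{1}})$ of the type appearing in Lemma \ref{fiofhoiwefhwef32r}, and it only remains to match up the two sets of conditions under this bijection.

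Next I would translate Condition \ref{efghewufw23} of Lemma \ref{fiofhoiwefhwef32r} applied to $f_{*}(M_{0},\rho_{0})\to(M_{1},\rho_{1})$: it says there is an invariant entourage $V$ of $X_{1}$ such that $\phi_{x_{1}^{\prime},x_{1}}\neq 0$ implies $(x_{1},x_{1}^{\prime})\in V$. Since $\phi_{x_{1}^{\prime},x_{1}}$ is the "column vector" $(\phi^{f}_{x_{1},x_{0}})_{x_{0}\in f^{-1}(\{x_{1}^{\prime}\})}$ built using the summand inclusions, it is nonzero precisely when some $\phi^{f}_{x_{1},x_{0}}$ with $f(x_{0})=x_{1}^{\prime}$ is nonzero. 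Hence Condition \ref{efghewufw23} for $\phi$ is equivalent to the existence of an invariant entourage $U_{1}$ of $X_{1}$ with the property that $\phi^{f}_{x_{1},x_{0}}\neq 0$ forces $(x_{1},f(x_{0}))\in U_{1}$ — this is the first condition in the corollary. (One direction takes $U_{1}=V$; for the converse one may take $V$ to be the invariant entourage $U_{1}$ itself, using that $f$ is controlled only to the extent needed, i.e. the condition is literally stated in terms of $f(x_{0})$, so no thickening is required.) The equivariance Condition \ref{efghewufw232} of Lemma \ref{fiofhoiwefhwef32r} for $f_{*}(M_{0},\rho_{0})$ reads $\rho_{1}(g)_{x_{1}}\circ\phi_{x_{1}^{\prime},x_{1}}=(g\phi)_{g^{-1}x_{1},g^{-1}x_{1}^{\prime}}\circ (f_{*}\rho_{0})(g)_{x_{1}^{\prime}}$, and one checks, again using the naturality of \eqref{g5giojgoigg5g45gfg} and the definition $f_{*}\rho_{0}(g)=\rho_{0}(g)\circ f^{-1}$ together with $f^{-1}(\{g^{-1}x_{1}^{\prime}\})=g^{-1}f^{-1}(\{x_{1}^{\prime}\})$, that this is equivalent, summand by summand over $x_{0}\in f^{-1}(\{x_{1}^{\prime}\})$, to the pointwise identity $\rho_{1}(g)_{x_{1}}\circ\phi^{f}_{x_{1},x_{0}}=(g\phi^{f})_{g^{-1}x_{1},g^{-1}x_{0}}\circ\rho_{0}(g)_{x_{0}}$, which is the second condition in the corollary.

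Putting these two equivalences together with the bijection of the first paragraph yields the corollary. Concretely I would write: "By Lemma \ref{fiofhoiwefhwef32r} a morphism \eqref{f3oifjiocvcwecw} is the same as a matrix $(\phi_{x_{1}^{\prime},x_{1}})$ of morphisms $(f_{*}M_{0})(\{x_{1}^{\prime}\})\to M_{1}(\{x_{1}\})$ satisfying Conditions \ref{efghewufw23} and \ref{efghewufw232}. Using \eqref{g5giojgoigg5g45gfg} and the definition of $f_{*}$, such matrices correspond bijectively to matrices \eqref{f34flk3n4fok34f34f}, and under this correspondence Condition \ref{efghewufw23} translates into the first displayed condition and Condition \ref{efghewufw232} into the second." I expect the main (and only real) obstacle to be the bookkeeping in the equivariance translation: one has to keep careful track of how the element $g$ permutes the fibres $f^{-1}(\{x_{1}^{\prime}\})$ and how this interacts with the chosen coproduct decompositions of Convention \ref{choosesum}, since the isomorphism \eqref{g5giojgoigg5g45gfg} is only canonical after such a choice, and the functor $g(-)$ does not strictly commute with the chosen sums. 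This is purely routine diagram-chasing, so in the write-up I would simply indicate the decomposition-by-summands argument and leave the verification to the reader, exactly in the spirit of the "straightforward (but tedious)" phrasing used elsewhere in the paper.
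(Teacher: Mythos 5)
Your proposal is correct and follows exactly the route the paper takes: the paper derives the corollary from Lemma \ref{fiofhoiwefhwef32r} by the very decomposition $(f_{*}M_{0})(\{x_{1}^{\prime}\})\cong\bigoplus_{x_{0}\in f^{-1}(\{x_{1}^{\prime}\})}M_{0}(\{x_{0}\})$ you use, identifying the two kinds of matrices and matching the conditions. Your write-up is in fact more detailed than the paper's, which records only the observation about the decomposition and then states the corollary as an immediate consequence.
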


\subsection{The symmetric monoidal  refinement of $\bV_{\bA}^{G}$}\label{vevgebvveerverv}

\newcommand{\cVA}{\mathcal{V}_{\bA}^{G}}

Let $\bA$ be a small  additive category with a strict $G$-action.
Then we let $$\pi:\cVA\to G\BC$$ denote the {Grothendieck} construction associated to the functor $\bV_{\bA}^{G}$ {(see \eqref{ekrjekrferferferf})}  viewed as a functor from $G\BC$ to $\Cat_{1}$. The goal of this section is the construction of a symmetric monoidal structure (see Definition \ref{gui23r32reger}) on $\cVA$ which satisfies the assumptions of Theorem \ref{rgio34t34r34rerg}.

\begin{ass}\label{giwrogerfwewf}
We assume that $\bA$ has a symmetric monoidal structure and that the strict  action of $G$ on $\bA$ has a refinement to an action by symmetric monoidal functors.  
\end{ass}

In order to introduce the notation for later arguments, we spell out the Assumption \ref{giwrogerfwewf} explicitly.
According to Definition \ref{gui23r32reger}   the category  $\bA$ comes with the following data:
\begin{enumerate}
\item a bifunctor $-\otimes_{\bA}-$, 
\item a tensor unit $1_{\bA}$,
\item an associativity constraint $\alpha^{\bA}$,
\item a unit constraint $\eta^{\bA}$,
\item a symmetry constraint $\sigma^{\bA}$.
\end{enumerate}
This data satisfy the {relations}   specified in Definition \ref{gui23r32reger}.
 
The strict action of $G$ on $\bA$ by symmetric monoidal functors   is implemented by the following data. For every $g$ in $G$ we have:
\begin{enumerate}
\item  an additive functor
$g:\bA\to \bA$,
\item\label{fweiufeiwofewfewf} an isomorphism $\epsilon^{g}:1_{\bA}\to g1_{\bA}$,
\item\label{vuhiwefwefwefewfwef2r} a natural isomorphism $\mu^{g}:(g-\otimes_{\bA}g-)\to g(-\otimes_{\bA}-)$,
\end{enumerate}
satisfying the {relations}  specified in Definition \ref{34f8924fregerg}.
We require that for all $g$ and $h $ in $G$   the following relation between the composition of symmetric monoidal functors and multiplication in $G$ holds true:
\begin{equation}\label{rgkjbkrjg34f34f34f}
(g,\epsilon^{g},\mu^{g})\circ (h,\epsilon^{h},\mu^{h})=
(gh,\epsilon^{gh},\mu^{gh})\ .
\end{equation}  The equality (as opposed to the additional data of a natural transformation) 
expresses the fact that the action of $G$ on $\bA$ is strict.

We now describe the category $\cVA$ 
explicitly.
\begin{enumerate}
\item The objects of $\cVA$ are pairs $(X,(M,\rho))$ of {objects} $X$ in $G\BC$ and $(M,\rho)$ in $\bV_{\bA}^{G}(X)$.
\item\label{gio43f34ferfef} A morphism $(f,\phi):(X,(M,\rho))\to (X^{\prime},(M^{\prime},\rho^{\prime}))$ consists of a morphism
$f:X\to X^{\prime}$ in $G\BC$ and a morphism $\phi:f_{*}(M,\rho)\to (M^{\prime},\rho^{\prime})$ in $\bV_{\bA}^{G}(X^{\prime})$.
\item The composition of morphisms is given by 
$$(f^{\prime},\phi^{\prime})\circ (f,\phi):=(f^{\prime}\circ f,\phi^{\prime}\circ f^{\prime}_{*}(\phi))\ .$$
\end{enumerate}

The functor $$\pi:\cVA\to G\BC\ , \quad  (X,(M,\rho))\mapsto X$$ is the obvious functor 
  which forgets the second component.

We now start with the description of the symmetric monoidal structure on $\cVA$.  
  
  Let $*$ denote the one-point space. 
Then we can  consider the  equivariant $*$-controlled $\bA$-object
$1_{*}=(M^{unit},\rho^{unit})$ in $\bV_{A}^{G}(*)$
defined as follows: 
\begin{enumerate}
\item The functor $M^{unit}:\cB(*)\to \bA$ is uniquely determined by
$M^{unit}(\{*\}):=1_{\bA}$.
\item $\rho^{unit}(g):=\epsilon^{g}$ for all $g$ in $G$ (see \ref{fweiufeiwofewfewf}.).
\end{enumerate}

\begin{ddd} \label{4gtiuo34ergg}The tensor unit of $\cVA$ is defined to be the object $1_{\cVA}:=(*,1_{*})$.
\end{ddd}

We now construct   the bifunctor \begin{equation}\label{23fb2jkbnfij23de23d23d}
-\otimes_{\cVA}- :\cVA\times \cVA\to \cVA\ .
\end{equation}
 We start with its definition on objects. We consider two objects
$(X,(M,\rho))$ and $(X^{\prime},(M^{\prime},\rho^{\prime}))$ in $\cVA$. 
Then we define the functor
$$M\boxtimes  M^{\prime}:\cB(X\otimes X^{\prime})\to \bA$$
as follows:
\begin{enumerate}
\item For every $B$ in $\cB(X\otimes X^{\prime})$ we set (see  Convention \ref{choosesum})
$$(M\boxtimes  M^{\prime})(B):=\bigoplus_{(x,x^{\prime})\in B} M(\{x\})\otimes_{\bA}M^{\prime}(\{x^{\prime}\})\ .$$
Note that the sum has finitely many non-zero summands because of Definition \ref{def:Xcontrolledobject} (\ref{def:Xcontrolledobject:it4}).
\item If $B^{\prime}$ is in $\cB(X\otimes X^{\prime})$ such that $B^{\prime}\subseteq B$, then the morphism
$$(M\boxtimes  M^{\prime})(B^{\prime}\subseteq B)\colon(M\boxtimes  M^{\prime})(B^{\prime})\to 
(M\boxtimes  M^{\prime})(B)$$
is given by the canonical map
$$\bigoplus_{(x,x^{\prime})\in B^{\prime}} 
M(\{x\})\otimes_{\bA}M^{\prime}(\{x^{\prime}\})
\to \bigoplus_{(x,x^{\prime})\in B } 
M(\{x\})\otimes_{\bA}M^{\prime}(\{x^{\prime}\})$$
as described in Convention \ref{choosesum}.
\end{enumerate}
 By using our Convention \ref{choosesum} and the universal property of the direct sum, one easily checks that this describes a functor satisfying the first three conditions of Definition \ref{def:Xcontrolledobject}. 
 
 We now define the family
 $\rho\boxtimes \rho^{\prime}$
 as follows:
 $$\hspace{-1cm}(\rho\boxtimes \rho^{\prime})(g)_{B}:= \oplus_{(x,x^{\prime})\in B} \rho(g)_{x}\otimes \rho^{\prime}(g)_{x^{\prime}}:
  \bigoplus_{(x,x^{\prime})\in B } 
M(\{x\})\otimes_{\bA}M^{\prime}(\{x^{\prime}\})\to \bigoplus_{(x,x^{\prime})\in B } 
gM(\{g^{-1}x\})\otimes_{\bA}gM^{\prime}(\{g^{-1}x^{\prime}\})$$
using the notation \eqref{v4rr4vhu34fr3f}. 
One checks using \eqref{rgkjbkrjg34f34f34f}  that
$(M\boxtimes M^{\prime},\rho\boxtimes \rho^{\prime})$  satisfies the remaining condition of  Definition \ref{def:Xcontrolledobject} 
 and therefore belongs to $\bV_{\bA}^{G}(X\otimes X^{\prime})$.

 \begin{ddd}
We define the bifunctor \eqref{23fb2jkbnfij23de23d23d} on objects by
$$(X,(M,\rho))\otimes_{\cVA}(X^{\prime},(M^{\prime},\rho^{\prime})):=(X\otimes X^{\prime},(M\boxtimes M^{\prime},\rho\boxtimes \rho^{\prime}))\ .$$
 \end{ddd}

Let $(f,\phi):(X_{0},(M_{0},\rho_{0}))\to (X_{1},(M_{1},\rho_{1}))$ be a morphism in $\cVA$ (see \ref{gio43f34ferfef}). 
Then we define the morphism
$$\hspace{-1cm}(g,\psi):=(f,\phi)\otimes_{\cVA} (X^{\prime},(M^{\prime},\rho^{\prime})):  (X_{0}\otimes X^{\prime} ,(M_{0}\boxtimes M^{\prime}, \rho_{0}\boxtimes \rho^{\prime}))\to  (X_{1}\otimes X^{\prime} ,(M_{1}\boxtimes M^{\prime}, \rho_{1}\boxtimes \rho^{\prime}))$$ as follows.
\begin{enumerate}
\item We set $g:=f\otimes \id_{X^{\prime}}:X_{0}\otimes X^{\prime}\to X_{1}\otimes X^{\prime}$ using the tensor product in $G\BC$.
\item In order to describe the morphism
$$\psi:(f\otimes\id_{X^{\prime}})_{*} (M_{0}\boxtimes M^{\prime}, \rho_{0}\boxtimes \rho^{\prime})\to (M_{1}\boxtimes M^{\prime}, \rho_{1}\boxtimes \rho^{\prime})$$ we use Corollary \ref{fuewihfiuwehfewfewfwef}. We must describe the matrix
$$(\psi^{f\otimes \id_{X^{\prime}}}_{(x_{1},y^{\prime}),(x_{0},x^{\prime})})_{(x_{0},x^{\prime})\in X_{0}\times X^{\prime}, (x_{1},y^{\prime})\in X_{1}\times X^{\prime}}\ .$$
Now note that by definition 
$$(M_{0}\boxtimes M^{\prime})(\{x_{0},x^{\prime}\})\cong M_{{0}}(\{x_{0}\})\otimes_{\bA} M^{\prime}(\{x^{\prime}\})$$ so that we can set 
$$\psi^{f\otimes  \id_{X^{\prime}}}_{(x_{1},y^{\prime}),(x_{0},x^{\prime})}:=\phi^{f}_{x_{1},x_{0}}\otimes_{\bA} (\id_{(M^{\prime},\rho^{\prime})})_{y^{\prime},x^{\prime}} :M_{0}(\{x_{0}\})\otimes_{\bA} M^{\prime}(\{x^{\prime}\})\to M_{1}(\{x_{1}\})\otimes_{\bA} M^{\prime}(\{y^{\prime}\})\ .$$
 One easily checks that
this matrix satisfies the conditions listed in  Corollary \ref{fuewihfiuwehfewfewfwef}  and therefore represents the desired morphism.
\end{enumerate}
In a similar manner we define
$(X,(M,\rho))\otimes (f^{\prime},\phi^{\prime})$
for a morphism
 $(f^{\prime},\phi^{\prime}):(X^{\prime}_{0},(M^{\prime}_{0},\rho^{\prime}_{0}))\to (X^{\prime}_{1},(M^{\prime}_{1},\rho_{1}))$.
 

\begin{ddd}
We define the bifunctor \eqref{23fb2jkbnfij23de23d23d} 
on morphisms by the preceding description.
\end{ddd}

It is straightforward to check that \eqref{23fb2jkbnfij23de23d23d} is  a bifunctor, i.e., that its description on morphisms is compatible with composition. 

Next we define the associativity constraint
$\alpha^{\cVA}$. We consider three objects
$(X,(M,\rho))$, $(X^{\prime},(M^{\prime},\rho^{\prime}))$, and
$(X^{\prime\prime},(M^{\prime\prime},\rho^{\prime\prime}))$. Then
$$ (f,\phi):=
\alpha_{(X,(M,\rho)),(X^{\prime},(M^{\prime},\rho^{\prime})),(X^{\prime\prime},(M^{\prime\prime},\rho^{\prime\prime}))}$$ must be a  morphism
$$ 
((X\otimes X^{\prime})\otimes X^{\prime\prime},((M\boxtimes M^{\prime})\boxtimes M^{\prime\prime},(\rho\boxtimes \rho^{\prime})\boxtimes \rho''))\to (X\otimes (X^{\prime}\otimes X^{\prime\prime}),(M\boxtimes (M^{\prime}\boxtimes M^{\prime\prime}),\rho\boxtimes (\rho^{\prime}\boxtimes \rho'')))$$ 
We set
$$f:=\alpha_{X,X^{\prime},X^{\prime\prime}}$$
using the associativity constraint of $G\BC$.
The second component $\phi$ is given via Corollary \ref{fuewihfiuwehfewfewfwef}  by the matrix whose only non-trivial entries are 
$$\phi^{f}_{(x,(x^{\prime},x^{\prime\prime})), ((x,x^{\prime}),x^{\prime\prime})}:=\alpha_{M(\{x\}),M^{\prime}(\{x^{\prime}\}),M^{\prime\prime}(\{x''\})}$$
using the associativity constraint of $\bA$.
The first condition  of Corollary \ref{fuewihfiuwehfewfewfwef} is satisfied for the diagonal entourage of $X\times (X^{\prime}\times X'')$, and for the second condition we use that $G$ acts on $\bA$ by symmetric monoidal functors, in particular the first relation in Definition \ref{34f8924fregerg} for $\mu^{g}$ for all $g$ in $G$, see \ref{vuhiwefwefwefewfwef2r}.

\begin{ddd}
We define  the associativity constraint
$\alpha^{\cVA}$ by the description above.
\end{ddd}

It is straightforward but tedious to check that $\alpha^{\cVA}$ is a natural transformation.

Following Definition \ref{4gtiuo34ergg} the unit constraint $\eta^{\cVA}$  of $\cVA$ is implemented by morphisms
$$(f,\phi): (*\otimes X,  (M^{unit}\boxtimes M,\rho^{unit}\boxtimes \rho))\to (X,(M,\rho))$$ for all objects $(X,(M,\rho))$ of $\cVA$. We set
$$f:=\eta_{X}$$
using the unit constraint of $G\BC$. Note that
$$(M^{unit}\boxtimes M)(\{(*,x)\})\cong 1_{\bA}\otimes_{\bA} M(\{x\})\ .$$
Hence, using  Corollary \ref{fuewihfiuwehfewfewfwef}, we can define  morphism $\phi$ such that the non-trivial entries of its matrix are
$$\phi^{f}_{x,(*,x)}:=\eta_{M(\{x\})}$$
using the unit constraint of $\bA$. It is easy to check that this matrix satisfies the first condition of Corollary \ref{fuewihfiuwehfewfewfwef} for the diagonal of $X$
and the second condition since the morphisms $\epsilon^{g}$ in \ref{fweiufeiwofewfewf} satisfy the relation {of Definition }\ref{34f8924fregerg}.\ref{fiowfewef23rr3} for all $g$ in $G$.

\begin{ddd}
We define the unit constraint $\eta^{\cVA}$ {by} the description above.
\end{ddd}

It is straightforward to check that $\eta^{\cVA}$ 
is a natural transformation.

Finally we define the symmetry constraint $\sigma^{\cVA}$. 
We consider two objects $(X,(M,\rho))$ and $(X^{\prime},(M^{\prime},\rho^{\prime}))$ of $\cVA$.
Then we must define a morphism
$$(f,\phi):(X\otimes X^{\prime},(M\boxtimes M^{\prime},\rho\boxtimes \rho^{\prime}))\to 
(X^{\prime}\otimes X ,(M^{\prime}\boxtimes M ,\rho^{\prime}\boxtimes \rho ))\ .$$
We set
$$f:=\sigma_{X,X^{\prime}}$$ using the symmetry contraint for $G\BC$. The morphism
$\phi$ is the given, using Corollary \ref{fuewihfiuwehfewfewfwef}, by the matrix whose only non-trivial entries are
$$\phi^{f}_{(x^{\prime},x),(x,x^{\prime})}:=\sigma_{M(\{x\}),M^{\prime}(\{x^{\prime}\})}$$
using the symmetry constraint of $\bA$.
One easily {checks} that the first condition of 
 Corollary \ref{fuewihfiuwehfewfewfwef} is satisfied for the diagonal entourage of $X^{\prime}\times X$. In order to verify the second condition we use that
 the transformations $\mu^{g}$ in \ref{vuhiwefwefwefewfwef2r}
 satisfy {Definition} \ref{34f8924fregerg}.\ref{evfiwehfioewfewfewfewff} for every $g$ in $G$.

\begin{ddd}
We define the symmetry constraint  $\sigma^{\cVA}$  of $\cVA$ by the description above.
\end{ddd}

It is straightforward to check that $\sigma^{\cVA}$ 
is a natural transformation.

\begin{prop}\label{efiweufowefewfw}
The functor $-\otimes_{\cVA}-$ and the object $1_{\cVA}$ together with the natural isomorphisms
$\alpha^{\cVA}$, $\eta^{\cVA}$ and $\sigma^{\cVA}$
define a symmetric monoidal structure on $\cVA$. 

The functor
$\pi:\cVA\to G\BC$ preserves the tensor product and   the tensor unit as well as the associator, unit, and symmetry transformations.
 \end{prop}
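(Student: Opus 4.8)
The plan is to verify, one constraint at a time, the relations listed in Definition~\ref{gui23r32reger}, reducing each of them to the corresponding relation already known to hold in $G\BC$ and in $\bA$. Since $\pi:\cVA\to G\BC$ will by construction send $\otimes_{\cVA}$ to $\otimes_{G\BC}$, $1_{\cVA}$ to $*=1_{G\BC}$, and the constraints $\alpha^{\cVA},\eta^{\cVA},\sigma^{\cVA}$ to $\alpha^{G\BC},\eta^{G\BC},\sigma^{G\BC}$ on first components, each of the four coherence relations splits into a statement about first components (which is literally the same relation in $G\BC$, already valid because $G\BC$ is symmetric monoidal) and a statement about second components. So the real content is the second component in each case.

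First I would check naturality of the three constraints — but this has already been asserted (``straightforward but tedious'') during their definitions, so I would simply invoke that. For the pentagon relation, both sides are morphisms in $\cVA$ lying over the pentagon diagram in $G\BC$, which commutes; on second components, after applying Corollary~\ref{fuewihfiuwehfewfewfwef} one reduces to comparing the two composites of $\alpha^{\bA}$'s on the objects $((M(\{x\})\otimes_\bA M'(\{x'\}))\otimes_\bA M''(\{x''\}))\otimes_\bA M'''(\{x'''\})$, which is precisely the pentagon relation in $\bA$. The equivariance (second condition of Corollary~\ref{fuewihfiuwehfewfewfwef}) follows from the associativity relation for the $\mu^g$ in Definition~\ref{34f8924fregerg}, i.e.\ from $G$ acting by symmetric monoidal functors. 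The triangle relation reduces similarly to the triangle relation in $\bA$ together with the unitality relation for the $\mu^g$ and $\epsilon^g$; the inverse relation ($\sigma\circ\sigma=\id$) reduces to the same in $\bA$ together with the symmetry relation for $\mu^g$; and the associativity coherence (hexagon) reduces to its counterpart in $\bA$, again using the symmetry and associativity relations for the $\mu^g$. In each case the matrices produced by Corollary~\ref{fuewihfiuwehfewfewfwef} are supported on the diagonal entourage, so the first condition of that corollary is automatic, and the only work is the bookkeeping of the second (equivariance) condition.

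The second assertion of the proposition is immediate from the constructions: $\pi$ is defined on objects by $(X,(M,\rho))\mapsto X$ and on morphisms by $(f,\phi)\mapsto f$, and we defined $\otimes_{\cVA}$, $1_{\cVA}$, $\alpha^{\cVA}$, $\eta^{\cVA}$, $\sigma^{\cVA}$ so that their first components are exactly $\otimes_{G\BC}$, $*$, $\alpha^{G\BC}$, $\eta^{G\BC}$, $\sigma^{G\BC}$. Hence $\pi$ carries each piece of data to the corresponding piece of data on $G\BC$ strictly, so in particular $\pi$ strictly preserves all of them (the compatibility morphisms $\epsilon^\pi$ and $\mu^\pi$ of Definition~\ref{34f8924fregerg} are identities). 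This is exactly hypothesis~(1) of Theorem~\ref{rgio34t34r34rerg}.

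The main obstacle is not conceptual but organizational: keeping track of the isomorphisms \eqref{g5giojgoigg5g45gfg} that identify $(M\boxtimes M')(B)$ with the direct sum $\bigoplus_{(x,x')\in B}M(\{x\})\otimes_\bA M'(\{x'\})$, and verifying that the matrix descriptions of $\alpha^{\cVA},\eta^{\cVA},\sigma^{\cVA}$ interact correctly with the pushforward functors $g_*$ appearing in the composition law of $\cVA$ — i.e.\ checking compatibility of $\otimes_{\cVA}$ with composition and confirming that the naturality squares genuinely commute after unwinding Corollary~\ref{fuewihfiuwehfewfewfwef}. All of this is routine diagram chasing once Convention~\ref{choosesum} is in force, so I would state it as such and spell out only the pentagon case in full, leaving the remaining three relations to the reader with the indicated references to the relations for $\mu^g$ and $\epsilon^g$.
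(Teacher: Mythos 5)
Your proposal is correct and follows essentially the same route as the paper, whose proof consists of the single observation that the relations of Definition~\ref{gui23r32reger} are verified by inserting the definitions and invoking the corresponding relations in $\bA$ and $G\BC$. Your write-up simply makes explicit the splitting into first components (handled by $G\BC$) and second components (handled, via the matrix description of Corollary~\ref{fuewihfiuwehfewfewfwef}, by the relations in $\bA$ and for the $\mu^{g}$, $\epsilon^{g}$), which is exactly the implicit content of the paper's argument.
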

 \begin{proof}
 One verifies the relations listed in Definition \ref{gui23r32reger} in a straightforward manner by inserting the definitions and using that the corresponding relations are satisfied for the symmetric monoidal structures on $\bA$ and $G\BC$. 
 \end{proof}

Let $X$ and $X^{\prime}$ be $G$-bornological coarse spaces.
\begin{prop}\label{efiweufowefewfw1}
The functor
$$\boxtimes_{X,X^{\prime}}:\bV^{G}_{\bA}(X)\times \bV^{G}_{\bA}(X^{\prime})\to \bV_{\bA}^{G}(X{\otimes} X^{\prime})$$ obtained in \eqref{f3foi34jfio34jf34f34f3f} is additive in both variables.
\end{prop}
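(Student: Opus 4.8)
The plan is to unwind the definition of $\boxtimes_{X,X'}$ on morphisms and reduce bi-additivity to the bi-additivity of the tensor functor $-\otimes_{\bA}-$ on $\bA$, using the matrix description of morphisms provided by Lemma~\ref{fiofhoiwefhwef32r} and Corollary~\ref{fuewihfiuwehfewfewfwef}. Recall that the functor $\boxtimes_{X,X'}$ sends a pair $\big((M,\rho),(M',\rho')\big)$ to $(M\boxtimes M',\rho\boxtimes\rho')$, and on morphisms it is extracted from $-\otimes_{\cVA}-$ applied to morphisms of the form $(\id_X,\phi)$ and $(\id_{X'},\phi')$; concretely, for $\phi\colon(M,\rho)\to(N,\sigma)$ in $\bV_{\bA}^{G}(X)$ and a fixed object $(M',\rho')$ in $\bV_{\bA}^{G}(X')$, the morphism $\phi\boxtimes_{X,X'}(M',\rho')$ has matrix entries $\phi_{x_1,x_0}\otimes_{\bA}\id_{M'(\{x'\})}$ by the formula for $(f,\phi)\otimes_{\cVA}(X',(M',\rho'))$ given above (with $f=\id_X$).

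First I would fix $X,X'$ and prove additivity in the first variable, the second being symmetric. So fix an object $(M',\rho')$ of $\bV_{\bA}^{G}(X')$ and let $\phi,\psi\colon(M,\rho)\to(N,\sigma)$ be two morphisms in $\bV_{\bA}^{G}(X)$ represented (after enlarging to a common entourage $U$, using the colimit description of the Hom-groups) by matrices $(\phi_{x_1,x_0})$ and $(\psi_{x_1,x_0})$. By Lemma~\ref{fiofhoiwefhwef32r} the sum $\phi+\psi$ is represented by the matrix $(\phi_{x_1,x_0}+\psi_{x_1,x_0})$. Applying the matrix description of $\boxtimes_{X,X'}$ and using that in the additive category $\bA$ the functor $-\otimes_{\bA}-$ is additive in the first variable (so $(\phi_{x_1,x_0}+\psi_{x_1,x_0})\otimes_{\bA}\id_{M'(\{x'\})}=\phi_{x_1,x_0}\otimes_{\bA}\id_{M'(\{x'\})}+\psi_{x_1,x_0}\otimes_{\bA}\id_{M'(\{x'\})}$), we see that $(\phi+\psi)\boxtimes_{X,X'}(M',\rho')$ and $\phi\boxtimes_{X,X'}(M',\rho')+\psi\boxtimes_{X,X'}(M',\rho')$ have the same matrix, hence are equal by the bijectivity part of Lemma~\ref{fiofhoiwefhwef32r} (equivalently Corollary~\ref{fuewihfiuwehfewfewfwef}). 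The same argument with the entries $\id_{M(\{x\})}\otimes_{\bA}\phi'_{x_1',x_0'}$ handles the second variable, using additivity of $-\otimes_{\bA}-$ in its second slot.

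One should also check compatibility with zero morphisms, i.e.\ that $0\boxtimes_{X,X'}(M',\rho')=0$ and $(M,\rho)\boxtimes_{X,X'}0=0$; this is immediate since the zero morphism corresponds to the zero matrix and $0\otimes_{\bA}\id=0$ in $\bA$. I do not anticipate a genuine obstacle here: the only mild care needed is bookkeeping with the colimit over entourages $\cC(X)^{G}$ — one must pass $\phi$ and $\psi$ to a common representative entourage $U$ before adding matrices, and observe that the induced entourage used to represent $\phi\boxtimes_{X,X'}(M',\rho')$ (the image $U\times\diag(X')$ under the structure maps, as in the construction of $(f,\phi)\otimes_{\cVA}(X',(M',\rho'))$) depends only on $U$ and not on the matrix entries, so the representatives match on the nose. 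Thus the whole proof is a short matrix computation reducing to the elementary bi-additivity of $\otimes_{\bA}$, and I would present it as such.
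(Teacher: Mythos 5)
Your proof is correct, but it establishes additivity in a different (equivalent) guise than the paper does. The paper works on objects: it verifies directly that the canonical morphism $(M_{0}\boxtimes_{X,X^{\prime}}M^{\prime})\oplus(M_{1}\boxtimes_{X,X^{\prime}}M^{\prime})\to(M_{0}\oplus M_{1})\boxtimes_{X,X^{\prime}}M^{\prime}$ is an isomorphism, reducing via Conditions \ref{def:Xcontrolledobject}.\ref{def:Xcontrolledobject:it1} and \ref{def:Xcontrolledobject}.\ref{def:Xcontrolledobject:it3} to the evaluation at single points $\{(x,x^{\prime})\}$, where it becomes the statement that $-\otimes_{\bA}M^{\prime}(\{x^{\prime}\})$ preserves direct sums of objects. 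You instead work on morphisms: you show that $-\boxtimes_{X,X^{\prime}}(M^{\prime},\rho^{\prime})$ is additive on $\Hom$-groups by comparing matrices via Lemma \ref{fiofhoiwefhwef32r}, reducing to additivity of $-\otimes_{\bA}\id_{M^{\prime}(\{x^{\prime}\})}$ on $\Hom$-groups of $\bA$. For functors between additive categories these two formulations are equivalent, so your argument does prove the proposition; but note that the form actually consumed later (in the proof of Theorem \ref{rgio34t34r34rerg}, via \cite[Notation 4.8.1.2]{HA}) is preservation of finite coproducts, so your route silently invokes the standard implication ``additive on $\Hom$-groups $\Rightarrow$ preserves biproducts.'' That implication is elementary and both arguments ultimately rest on the same (implicit) hypothesis that $\otimes_{\bA}$ is bi-additive, so this is a matter of presentation rather than substance. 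Two small credits to your write-up: you correctly identify that the bijection of Lemma \ref{fiofhoiwefhwef32r} is compatible with addition of matrices, and your care with passing to a common entourage in the colimit defining the $\Hom$-groups is exactly the bookkeeping one needs; the paper's object-level argument sidesteps both points, which is one reason it is shorter.
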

\begin{proof}
Let $(M_{i},\rho_{i})$ be in $\bV_{\bA}^{G}(X)$ for $i=0,1$  and $(M^{\prime},\rho)$ be in $\bV_{\bA}^{G}(X^{\prime})$.  In view of the symmetry it suffices to show that the canonical morphism
$$(M_{0}\boxtimes_{X,X^{\prime}}M^{\prime})\oplus (M_{1}\boxtimes_{X,X^{\prime}}M^{\prime})\to (M_{0}\oplus M_{1})\boxtimes_{X,X^{\prime}} M^{\prime}$$
is an isomorphism. In view of Conditions 
 \ref{def:Xcontrolledobject}.\ref{def:Xcontrolledobject:it1} and \ref{def:Xcontrolledobject}.\ref{def:Xcontrolledobject:it3} it suffices to show that
$$\left[(M_{0}\boxtimes_{X,X^{\prime}}M^{\prime} )\oplus (M_{1}\boxtimes_{X,X^{\prime}}M^{\prime})\right](\{(x,x^{\prime})\})\to \left[(M_{0}\oplus M_{1})\boxtimes_{X,X^{\prime}} M^{\prime}\right](\{(x,x^{\prime})\})$$
is an isomorphism for every point $(x,x^{\prime})$ in $X\times X^{\prime}$.
By inserting the definitions we see that  this morphism is the same as
$$(M_{0}(\{x\})\otimes_{\bA} M^{\prime}(\{x^{\prime}\}))\oplus (M_{1}(\{x\})\otimes_{\bA} M^{\prime}(\{x^{\prime}\}))\to (M_{0}(\{x\})\oplus M_{1}(\{x\}))\otimes_{\bA} M^{\prime}(\{x^{\prime}\})\ .$$
 But this last morphism is an isomorphism since the tensor product in $\bA$ is additive in the first argument.
\end{proof}

Let $f\colon X\to X^{\prime}$ and $f^{\prime}\colon Y\to Y^{\prime}$ be two morphisms of $G$-bornolgical coarse spaces. Let $(M,\rho)$  be in $\bV_{\bA}^{G}(X)$
and $(N,\eta)$ be in $\bV_{A}^{G}(Y)$.  

\begin{lem}\label{lemmaiso} The morphism   	 	$$(f\otimes f^{\prime})_{*}((M,\rho)\boxtimes_{X ,Y} (N,\eta)) \to  f_{*}(M,\rho)\boxtimes_{X^{\prime},Y^{\prime}} f^{\prime}_{*}(N,\eta)$$  in $\bV_{\bA}^{G}(X^{\prime}\otimes Y^{\prime})$ (see {\eqref{vwrkjbwwejkvjbnkwevewvw}})
 is an isomorphism.
\end{lem}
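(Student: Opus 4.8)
The plan is to verify that the canonical comparison morphism of Equation \eqref{vwrkjbwwejkvjbnkwevewvw}, in this particular case, is the identity — or at worst an isomorphism coming from the coherence isomorphisms of $\bA$. The key observation is that all the relevant objects in $\bV_{\bA}^{G}(X'\otimes Y')$ decompose canonically as direct sums indexed over points, by the isomorphism \eqref{g5giojgoigg5g45gfg}, and by Conditions \ref{def:Xcontrolledobject}.\ref{def:Xcontrolledobject:it1} and \ref{def:Xcontrolledobject}.\ref{def:Xcontrolledobject:it3} it suffices to check that the comparison morphism restricts to an isomorphism on each point $(x',y')$ of $X'\times Y'$. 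So the first step is to reduce to the pointwise statement, just as in the proof of Proposition \ref{efiweufowefewfw1}.

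Next I would compute both sides of the point evaluation explicitly. On the left, using $f_{*}M = M\circ f^{-1}$ together with $(X\otimes Y)$-point decomposition and \eqref{g5giojgoigg5g45gfg}, one gets
$$\big[(f\otimes f^{\prime})_{*}(M\boxtimes N)\big](\{(x^{\prime},y^{\prime})\}) \;\cong\; \bigoplus_{\substack{x\in f^{-1}(x^{\prime})\\ y\in (f^{\prime})^{-1}(y^{\prime})}} M(\{x\})\otimes_{\bA} N(\{y\})\ ,$$
where I used that $(f\times f^{\prime})^{-1}(\{(x^{\prime},y^{\prime})\}) = f^{-1}(x^{\prime})\times (f^{\prime})^{-1}(y^{\prime})$. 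On the right, one first computes
$$(f_{*}M)(\{x^{\prime}\}) \cong \bigoplus_{x\in f^{-1}(x^{\prime})} M(\{x\})\ ,\qquad (f^{\prime}_{*}N)(\{y^{\prime}\}) \cong \bigoplus_{y\in (f^{\prime})^{-1}(y^{\prime})} N(\{y\})\ ,$$
and then
$$\big[f_{*}M \boxtimes f^{\prime}_{*}N\big](\{(x^{\prime},y^{\prime})\}) \cong (f_{*}M)(\{x^{\prime}\})\otimes_{\bA}(f^{\prime}_{*}N)(\{y^{\prime}\}) \cong \Big(\bigoplus_{x} M(\{x\})\Big)\otimes_{\bA}\Big(\bigoplus_{y} N(\{y\})\Big)\ .$$
The comparison morphism is then exactly the canonical distributivity isomorphism of the biadditive bifunctor $-\otimes_{\bA}-$ (which is an isomorphism since $\boxtimes_{X',Y'}$ is bi-additive, Proposition \ref{efiweufowefewfw1}, equivalently since $\otimes_{\bA}$ preserves finite direct sums in each variable), under the identifications above. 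I would trace through the definition of the morphism \eqref{vwrkjbwwejkvjbnkwevewvw} — it is the second component of the tensor product $(f,\id)\otimes_{\cVA}(f^{\prime},\id)$ of the two identity-on-fibre morphisms — and, using the explicit matrix description from Corollary \ref{fuewihfiuwehfewfewfwef} together with the definition of $\otimes_{\cVA}$ on morphisms given in Section \ref{vevgebvveerverv}, confirm that its matrix entries are precisely the identity morphisms $\id_{M(\{x\})}\otimes_{\bA}\id_{N(\{y\})}$ suitably arranged, so that the induced map on point-evaluations is the distributivity isomorphism.

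The main obstacle I anticipate is purely bookkeeping: keeping the indexing sets straight when iterating the decomposition \eqref{g5giojgoigg5g45gfg} over the product space versus over the two factors separately, and matching the canonical coproduct-comparison maps of Convention \ref{choosesum} on both sides. There is no conceptual difficulty — everything is an isomorphism because $\otimes_{\bA}$ is biadditive and because pushforward along a proper map commutes with the point decompositions via the bijection $(f\times f^{\prime})^{-1}(\{(x^{\prime},y^{\prime})\}) = f^{-1}(x^{\prime})\times (f^{\prime})^{-1}(y^{\prime})$ — so the proof is essentially a diagram chase reducing to this bijection of index sets and the distributivity law. I would present it in the compressed style of Proposition \ref{efiweufowefewfw1}: reduce to points, insert definitions, and observe that the resulting map is manifestly the biadditivity isomorphism of $\otimes_{\bA}$.
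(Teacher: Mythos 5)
Your proposal is correct and follows essentially the same route as the paper: reduce to the evaluation at a single point $(x^{\prime},y^{\prime})$ via Conditions \ref{def:Xcontrolledobject}.\ref{def:Xcontrolledobject:it1} and \ref{def:Xcontrolledobject}.\ref{def:Xcontrolledobject:it3}, identify both sides using the bijection $(f\times f^{\prime})^{-1}(\{(x^{\prime},y^{\prime})\})=f^{-1}(x^{\prime})\times (f^{\prime})^{-1}(y^{\prime})$, and conclude because $\otimes_{\bA}$ preserves finite direct sums in each variable. No substantive difference from the paper's argument.
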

\begin{proof}
In view of Conditions 
 \ref{def:Xcontrolledobject}.\ref{def:Xcontrolledobject:it1} and \ref{def:Xcontrolledobject}.\ref{def:Xcontrolledobject:it3} it suffices to show that
 $$[(f\otimes f^{\prime})_{*}((M,\rho)\boxtimes_{X,Y} (N,\eta))](\{(x^{\prime},y^{\prime})\}) \to  [f_{*}(M,\rho)\boxtimes_{X^{\prime},Y^{\prime}} f^{\prime}_{*}(N,\eta)](\{(x^{\prime},y^{\prime})\})$$
 is an isomorphism for every point $(x^{\prime},y^{\prime})$ in $X^{\prime}\times Y^{\prime}$. Inserting the definitions this morphism is given by
 \begin{equation}\label{fehi23ufhwefwfefe}\bigoplus_{(x,y)\in (f\times f^{\prime})^{-1}(\{x^{\prime},y^{\prime}\})} M(\{x\})\otimes_{\bA} N(\{y\})\to  \left(\bigoplus_{x\in f^{-1}(\{x^{\prime}\})} M(\{x\})\right) \otimes_{\bA} \left(\bigoplus_{y\in (f^{\prime})^{-1}(\{y^{\prime}\})} N(\{y\})\right)\end{equation}
 which for every $(x,y)$ in  $(f\times f^{\prime})^{-1}(\{x^{\prime},y^{\prime}\})$
 is the morphism
  $$  M(\{x\})\otimes_{\bA} N(\{y\})\to  \left(\bigoplus_{x\in f^{-1}(\{x^{\prime}\})} M(\{x\})\right) \otimes_{\bA} \left(\bigoplus_{y\in (f^{\prime})^{-1}(\{y^{\prime}\})} N(\{y\})\right) $$
  induced by the inclusions of the respective summands of the tensor factors.
  Since the tensor product in $\bA$ preserves sums in both arguments we conclude that 
 \eqref{fehi23ufhwefwfefe} is an isomorphism.
\end{proof}


In view of Theorem \ref{rgio34t34r34rerg} the  Propositions \ref{efiweufowefewfw} and \ref{efiweufowefewfw1} and Lemma \ref{lemmaiso} now imply:

\begin{theorem}\label{mainthm}
If $\bA$ is a symmetric monoidal  additive category with a strict action of $G$ by symmetric monoidal functors, then the functor $loc\circ \bV_{\bA}^{G}:G\BC\to  \Add_{\infty}$ admits a refinement to a {lax} symmetric monoidal functor
$$\bV_{\bA}^{G,\otimes}:\Nerve(G\BC^{\otimes})\to \Add_{\infty}^{\otimes}\ .$$
\end{theorem}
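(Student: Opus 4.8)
The statement is an immediate application of Theorem~\ref{rgio34t34r34rerg} once we have verified its three hypotheses for the data consisting of the symmetric monoidal $1$-category $G\BC$, the functor $\bV_{\bA}^{G}\colon G\BC\to \Add_{1}$, and the symmetric monoidal structure on the Grothendieck construction $\cVA$. All of this preparatory work has already been done above, so the proof consists in assembling the pieces. The plan is as follows.

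First I would invoke Proposition~\ref{efiweufowefewfw}: the bifunctor $-\otimes_{\cVA}-$, the unit $1_{\cVA}$, and the constraints $\alpha^{\cVA}$, $\eta^{\cVA}$, $\sigma^{\cVA}$ constructed in Section~\ref{vevgebvveerverv} endow $\cVA$ with a symmetric monoidal structure, and the projection $\pi\colon\cVA\to G\BC$ strictly preserves the tensor product, the tensor unit, and the associator, unit, and symmetry transformations. This is exactly the first hypothesis of Theorem~\ref{rgio34t34r34rerg}, and it also guarantees (as noted after Theorem~\ref{gui34tefrgeg}) that the bifunctors $\boxtimes_{X,X'}$ of \eqref{f3foi34jfio34jf34f34f3f} are defined and agree with the $M\boxtimes M'$ constructed explicitly in Section~\ref{vevgebvveerverv}. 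Second, Proposition~\ref{efiweufowefewfw1} shows that each $\boxtimes_{X,X'}$ is additive in both variables, which is the second hypothesis (Condition~\ref{gioreg34teergegegegeg}). Third, Lemma~\ref{lemmaiso} shows that for morphisms $f\colon X\to X'$ and $f'\colon Y\to Y'$ in $G\BC$ the comparison morphism \eqref{vwrkjbwwejkvjbnkwevewvw}, namely
$$(f\otimes f')_{*}\big((M,\rho)\boxtimes_{X,Y}(N,\eta)\big)\ \longrightarrow\ f_{*}(M,\rho)\boxtimes_{X',Y'}f'_{*}(N,\eta)\ ,$$
is an isomorphism; this is the third hypothesis. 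Hence Theorem~\ref{rgio34t34r34rerg} applies and produces a morphism of $\infty$-operads $\bV_{\bA}^{G,\otimes}\colon\Nerve(G\BC^{\otimes})\to \Add_{\infty}^{\otimes}$ inducing on underlying $\infty$-categories a functor equivalent to $(\bV_{\bA}^{G})_{\infty}=loc\circ\bV_{\bA}^{G}$, which is precisely the asserted lax symmetric monoidal refinement.

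There is essentially no obstacle left at this stage: the genuinely hard work lies in the construction and verification of the symmetric monoidal structure on $\cVA$ (which requires repeatedly invoking Lemma~\ref{fiofhoiwefhwef32r} and Corollary~\ref{fuewihfiuwehfewfewfwef} to check that various matrices define controlled morphisms, and the strictness relation \eqref{rgkjbkrjg34f34f34f} for the $G$-action by symmetric monoidal functors to verify the equivariance conditions) and in the $\infty$-categorical machinery of Theorem~\ref{rgio34t34r34rerg}, both already established. The only point deserving a word of care is to confirm that the bifunctor $M\boxtimes M'$ built by hand in Section~\ref{vevgebvveerverv} really is the one extracted from the monoidal structure on $\cVA$ via the recipe $(X,A)\otimes_{\cF}(X',A')=(X\otimes_{\bC}X',A\boxtimes_{X,X'}A')$; this is immediate from Definition of $-\otimes_{\cVA}-$ together with the strict compatibility $\pi((X,A)\otimes_{\cVA}(X',A'))=X\otimes X'$ from Proposition~\ref{efiweufowefewfw}. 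With that identification in place, Propositions~\ref{efiweufowefewfw} and \ref{efiweufowefewfw1} and Lemma~\ref{lemmaiso} are exactly Conditions~(1), (2), (3) of Theorem~\ref{rgio34t34r34rerg}, completing the proof.
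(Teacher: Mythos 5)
Your proposal is correct and matches the paper's own argument exactly: the paper deduces Theorem \ref{mainthm} by citing Proposition \ref{efiweufowefewfw}, Proposition \ref{efiweufowefewfw1} and Lemma \ref{lemmaiso} as verifying the three hypotheses of Theorem \ref{rgio34t34r34rerg}, precisely as you do.
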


\subsection{The symmetric monoidal $K$-theory functor for additive categories}\label{foiwejfowfwefefwef}

\newcommand{\dgCat}{\mathbf{dgCat}}

In \cite{buci} a universal $K$-theory functor $$\UK:\Nerve(\Add_{1})\to \cM_{loc}$$
was considered, where $\cM_{loc}$ is the category of non-commutative motives of Blumberg-Gepner-Tabuada \cite{MR3070515}. This functor was defined {as the upper horizontal composition  in the diagram}
$$\xymatrix{\Nerve(\Add_{1})\ar[d]^{loc}
\ar[r]^{\Ch^{b}[-]_{\infty}}&\Cat^{ex}_{\infty}\ar[r]^{\cU_{loc}}& \cM_{loc}\\\Add_{\infty}\ar@{..>}[urr]_{\UK_{\infty}}&&
}\ ,$$
where $\cU_{loc}$ is the universal localizing invariant, and $\Ch^{b}[-]_{\infty}$ sends an additive category $\bA$ to the stable $\infty$-category of bounded chain complexes over $\bA$ with homotopy equivalences   inverted. 
Since the functor $\UK$ preserves equivalences of additive categories we have the indicated factorization ${\UK_{\infty}}$.

  \begin{theorem}\label{rgreioghjo34tergergeg21}
The functor $\UK_{\infty}$ admits a symmetric monoidal refinement
$$\UK_{\infty}^{\otimes}:\Add_{\infty}^{\otimes}\to \cM_{loc}^{\otimes}\ .$$
\end{theorem}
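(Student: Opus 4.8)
The plan is to factor $\UK_\infty$ through the world of (small) $dg$-categories, where symmetric monoidal structures are already available in the literature, and to check that each intermediate functor is symmetric monoidal. First I would replace $\Add_1$ by $dg$-categories: an additive category $\bA$ gives a $\IZ$-linear $dg$-category $\bA_{dg}$ (concentrated in degree $0$), and the assignment $\bA\mapsto \Ch^b(\bA)$ with homotopy equivalences inverted agrees, as a stable $\infty$-category, with the underlying $\infty$-category of the pretriangulated hull $\bA_{dg}^{\mathrm{perf}}$ in the Morita model structure on $\dgCat$. This is standard (e.g.\ \cite{MR2028075}, \cite{MR2276263}); the point for us is that $\bA\mapsto \bA_{dg}$ is lax symmetric monoidal, because the tensor product $\otimes_{\Add}$ of additive categories is computed with $\otimes_\IZ$ on $\Hom$-groups, which is exactly what the $dg$-tensor product does on $dg$-categories in degree $0$, and the unit $\IZ$ (viewed as a one-object $dg$-category) is sent to the additive category with one object and endomorphism ring $\IZ$, which maps to the additive unit. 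Passing to the localization, this produces a symmetric monoidal functor $\Add_\infty^\otimes \to \dgCat_\infty^\otimes$, where $\dgCat_\infty^\otimes$ is the symmetric monoidal $\infty$-category of (small) $dg$-categories localized at Morita equivalences, whose symmetric monoidal structure is the one established by Cohn \cite{cohn} (or Blumberg–Gepner–Tabuada \cite{bgt-2}, \cite{MR3070515}).

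Next I would invoke the known symmetric monoidal refinement of the universal localizing invariant. By \cite[Thm.\ 5.8]{bgt-2} the target $\cM_{loc}$ of $\cU_{loc}$ is symmetric monoidal, and in fact $\cU_{loc}$ itself refines to a symmetric monoidal functor $\dgCat_\infty^\otimes \to \cM_{loc}^\otimes$ — this is precisely the statement that $\cM_{loc}$ is the symmetric monoidal stable presentable $\infty$-category obtained by universally inverting localizing invariants, and that $\cU_{loc}$ is the canonical (symmetric monoidal) functor realizing it; see \cite[\S 6, \S 7]{MR3070515} and \cite[Thm.\ 5.11]{bgt-2}. Composing, I get a symmetric monoidal functor
$$\Add_\infty^\otimes \to \dgCat_\infty^\otimes \xrightarrow{\cU_{loc}^\otimes} \cM_{loc}^\otimes\ .$$

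Finally I would check that the underlying functor of this composition is equivalent to $\UK_\infty$. Unwinding the definitions, the underlying functor sends $\bA$ to $\cU_{loc}(\bA_{dg}^{\mathrm{perf}})$, and there is a natural equivalence of stable $\infty$-categories $\bA_{dg}^{\mathrm{perf}}\simeq \Ch^b(\bA)[W^{-1}]$ compatible with $\cU_{loc}$ (localizing invariants are Morita invariant and send $\bA_{dg}$ to its idempotent-completed pretriangulated hull, which is $\Ch^b(\bA)[W^{-1}]$ after idempotent completion; since $\cU_{loc}$ inverts idempotent completions this is harmless). Hence the composite agrees with $\UK$ on $\Nerve(\Add_1)$, and since both factor through $loc:\Nerve(\Add_1)\to\Add_\infty$ and $\Add_\infty$ is the localization at $W_{\Add}$, the induced functors out of $\Add_\infty$ are equivalent; that is, the underlying functor of our symmetric monoidal composite is $\UK_\infty$, so it is the desired $\UK_\infty^\otimes$.

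The main obstacle is not any single step but locating the cleanest citations: the symmetric monoidal refinement of $\cU_{loc}$ and the symmetric monoidal Morita localization of $\dgCat$ are each spread across \cite{MR3070515}, \cite{bgt-2}, and \cite{cohn}, and the identification $\Ch^b[-]_\infty \simeq (-)_{dg}^{\mathrm{perf}}$ must be shown to be not merely an equivalence but one through which the $dg$-tensor product matches $\otimes_{\Add}$. The lax-versus-strong point is minor here: since $\otimes_{\Add}$ on $\Hom$-groups is literally $\otimes_\IZ$, the refinement is in fact strong symmetric monoidal, as the theorem statement claims.
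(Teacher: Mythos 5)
Your overall strategy coincides with the paper's: factor $\UK_{\infty}$ as (additive categories $\to$ dg\mbox{-}categories localized at Morita equivalences) followed by the symmetric monoidal refinement of $\cU_{loc}$ from \cite{bgt-2}, descend along $loc^{\otimes}$, and identify the underlying functor with $\Ch^{b}[-]_{\infty}$. The gap is concentrated in the sentence ``Passing to the localization, this produces a symmetric monoidal functor $\Add_{\infty}^{\otimes}\to \mathbf{dgCat}_{\infty}^{\otimes}$.'' The tensor product of dg-categories does \emph{not} preserve Morita equivalences, so the symmetric monoidal structure on $\Nerve(\mathbf{dgCat}_{1}^{\otimes})$ does not descend to the localization; the symmetric monoidal structure on $\Nerve(\mathbf{dgCat}_{1})[W^{-1}_{Morita}]$ constructed by Cohn and Blumberg--Gepner--Tabuada is the \emph{derived} tensor product. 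An additive category viewed as a dg-category concentrated in degree $0$ is not locally flat in general (its $\Hom$-groups may have torsion), so $\bA_{dg}\otimes \bB_{dg}$ --- which is exactly what $\otimes_{\Add}$ computes --- differs from $\bA_{dg}\otimes^{L}\bB_{dg}$ by $\mathrm{Tor}$-terms. This is precisely the obstacle the paper's proof is organized around: it bypasses the node $\Nerve(\mathbf{dgCat}_{1})[W^{-1}_{Morita}]$ by first applying a lax symmetric monoidal \emph{flat resolution functor} $\bQ:\Add_{1}\to \mathbf{dgCat}_{1,flat}$ (built from a functorial flat resolution $Q$ of abelian groups, Lemma \ref{giojwggrgreg}) and then localizing the subcategory of locally flat dg-categories, on which the underived tensor product does compute the derived one.

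The same oversight invalidates your closing claim that the refinement is \emph{strong} symmetric monoidal ``since $\otimes_{\Add}$ on $\Hom$-groups is literally $\otimes_{\Z}$'': the structure map one actually obtains factors through $Q(A)\otimes_{\Z} Q(B)\to Q(A\otimes_{\Z}B)$, which is a quasi-isomorphism only when $\mathrm{Tor}_{1}^{\Z}(A,B)$ vanishes, so the refinement is genuinely lax (as stated in Theorem \ref{rgreioghjo34tergergeg2} of the introduction). The remaining steps of your plan --- the symmetric monoidal refinement of $\cU_{loc}$, the descent along the symmetric monoidal localization using \cite{hinich}, and the identification of the underlying functor with $\Ch^{b}[-]_{\infty}$ via the pretriangulated hull --- do match the paper's argument.
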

\begin{proof}
 The proof of this theorem will be finished at the end of the present section.
 As a first step we observe that it suffices to construct a symmetric monoidal refinement  $\UK^{\otimes}$ of $\UK$.
 Then we obtain the symmetric monoidal refinement  $\UK^{\otimes}_{\infty} $ of $\UK_{\infty}$ from the universal property of the symmetric monoidal localization $loc^{\otimes}:\Nerve(\Add_{1}^{\otimes})\to \Add_{\infty}^{\otimes}$ using \cite{hinich}.

By   \cite{bgt-2} the universal localizing invariant $\cU_{loc}$  refines to a symmetric monoidal functor
$$\cU^{\otimes}_{loc}:\Cat^{ex,\otimes}_{\infty}\to \cM^{\otimes}_{loc}\ .$$  It therefore remains to produce a symmetric monoidal functor $$St^{\otimes}: \Add^{\otimes}_{\infty} \to \Cat^{ex,\otimes}_{\infty}$$ refining $\Ch^{b}[-]_{\infty}$. We use the symbol $St$ in order to indicate that this functor is related  with stabilization.

We are going to use the following notation.
 The category $\dgCat_{1}$ is the $1$-category of {small} dg-categories. The  set  $W_{Morita}$ is  the set of Morita equivalences, i.e., functors between $dg$-categories $\cC\to \cD$ which induce an equivalence of derived categories \cite[Sec. 4.6]{MR2275593},  \cite[Def. 2.29]{cohn}.

 The category $\dgCat_{1}$  contains the full subcategory
$\dgCat_{1,flat}$  of locally flat dg-categories, i.e., dg-categories $\bC$ with the property that for every two objects $C,C^{\prime}$ in $\bC$ the complex $\Hom_{\bC}(C,C^{\prime})$ consists of flat $\Z$-modules.  It furthermore contains the full subcategory of pre-triangulated $dg$-categories \cite[Sec. 4.5]{MR2275593}, \cite{BKdg}. 

Furthermore, $\Cat^{ex}_{\infty,H\Z}$ is the category of $H\Z$-linear stable {idempotent complete} $\infty$-categories and $H\Z$-linear exact functors, and $\cF$ forgets the $H\Z$-linear structure.  For the equivalence  marked by $DK$ (for Dold-Kan) we refer to \cite{cohn}. 
\begin{prop}
We have the  bold part of the following commuting diagram:  \begin{equation}\label{fhwjkefkj23r23r23}
\xymatrix{&\Nerve(\dgCat_{1}^{pre})\ar[d]^{\subseteq}\ar[r]^{\ell}&\Nerve(\dgCat_{1}^{pre})[W^{-1}_{Morita}]\ar[d]^{\simeq}_{\iota} \ar[drr]^{\Nerve_{\infty}^{dg}(-)}&&\\\Nerve(\Add_{1})\ar@/^4cm/@{..>}[rrrr]^{St}\ar@{..>}[dr]^{\bQ}\ar[r]&\Nerve(\dgCat_{1})\ar[ur]^{\ell\circ \Ch^{b}}\ar[r]^{\ell}&\Nerve(\dgCat_{1})[W^{-1}_{Morita}]\ar[r]_{\simeq}^{DK}& \Cat_{\infty,H\Z}^{ex}\ar[r]_{\cF}&\Cat^{ex}_{\infty}\\ &\Nerve(\dgCat_{1,flat})\ar[u]\ar[r]&\Nerve(\dgCat_{1,flat})[W^{-1}_{Morita}]\ar[ur]\ar[u]_{!!}^{\simeq}& &}\ .\end{equation}
\end{prop}
\begin{proof}
\begin{enumerate}
\item For every $dg$-category the canonical inclusion  $\cC\to \Ch^{b}(\cC)$  represents the pretriangulated hull \cite[Sec. 4.5]{MR2275593}, \cite{BKdg}. In particular, the functor $\Ch^{b}$  has values in pretriangulated $dg$-categories.  
\item The two triangles in the corresponding square commute since 
  for every $dg$-category $\cC$ the canonical inclusion induces a Morita equivalence $\cC\to \Ch^{b}(\cC)$. To this end we
   use that the inclusion of $\cC$ into its triangulated hull is a Morita {equivalence} {\cite[Sec. 4.6]{MR2275593}}.  
%
  
\item The $dg$-nerve $\Nerve^{dg} :\Nerve(\dgCat_{1}^{pre})\to \Cat^{ex}_{\infty}$ preserves Morita equivalences and therefore descends to $\Nerve^{dg}_{\infty}$ as indicated.
\item We have an equivalence $\cF\circ DK \circ  \iota\simeq \Nerve_{\infty}^{dg}$, \cite[Prop. 3.3.2]{MR3607208}, see also \cite{Tabuada_2010}, \cite{toen}.
In order to provide more details we consider   the functor $Z^{0}:\dgCat_{1}^{pre}\to \Cat_{\infty}$   which associates to a $dg$-category  its underlying category (with $\Hom_{Z^{0}(\cC)}(A,B)=Z^{0}(\Hom_{\cC}(A,B))$) considered as an $\infty$-category. We  furthermore let $W_{\cC}$ be the morphisms in $Z^{0}(\cC)$ which become isomorphisms in the homotopy category $H^{0}(\cC)$ (with $\Hom_{H^{0}(\cC)}(A,B)=H^{0}(\Hom_{\cC}(A,B))$).
 Then both functors
 $$Z^{0}(\cC)\to N^{dg}(\cC) \ ,\quad 
 Z^{0}(\cC)\to \cF(DK(\iota(\cC)))$$ present 
 the localization $ Z^{0}(\cC)\to Z^{0}(\cC)[W_{\cC}^{-1}]$.
 
\end{enumerate}

\end{proof}

  The horizontal composition given by the  middle row  in \eqref{fhwjkefkj23r23r23}  defines a  functor $St$.

\begin{lem}
The functor $St$ is equivalent to the functor
$\Ch^{b}(-)_{\infty}$ constructed in \cite[Prop. 2.11]{buci}.
\end{lem}
\begin{proof}
By \cite[Rem. 2.9]{buci} we have the first equivalence of functors in the chain 
$$\Ch^{b}(-)_{\infty}\simeq \Nerve^{dg}\circ \Ch^{b}(-)\simeq \Nerve^{dg}_{\infty}\circ \ell\circ \Ch^{b}$$ from $\Add_{1}$ to $\Cat^{ex}_{\infty}$. This implies the Lemma in view of the commutativity of 
\eqref{fhwjkefkj23r23r23}.
%
%
%
\end{proof}

\begin{prop} The functor $St$ has a symmetric monoidal refinement $St^{\otimes}$.
\end{prop}
\begin{proof}
All   $1$-categories in the lower {two} lines of the diagram \eqref{fhwjkefkj23r23r23} have symmetric monoidal structures and the functors
connecting them have canonical symmetric monoidal refinements. The same is true for the $\infty$-categories and the remaining functors except  for
$\Nerve(\dgCat_{1})[W^{-1}_{Morita}]$ and the corresponding functors.
The problem is that the tensor product of dg-categories is not compatible with Morita equivalences and therefore does not descend to the localization directly. For this reason one considers the
subcategory of locally flat dg-categories and uses the equivalence $!!$ in order to transfer the symmetric monoidal structures. So in order to construct the symmetric monoidal refinement of 
$St$ we must bypass this node of the diagram. To this end we use a symmetric monoidal flat resolution functor $\bQ$ as indicated. The left triangle in  \eqref{fhwjkefkj23r23r23}  is filled by a natural transformation (not an isomorphism), but
 the square
$$\xymatrix{N(\Add_{1})\ar[rr]\ar[dr]^{\bQ}&&\Nerve(\dgCat_{1})[W^{-1}_{Morita}]\\&\Nerve(\dgCat_{1,flat})\ar[r]&\Nerve(\dgCat_{1,flat})[W^{-1}_{Morita}]\ar[u]^{\simeq}_{!!}
}$$ {does
commute}. We then get the following commuting diagram of symmetric monoidal functors
 $$\xymatrix{  \Nerve(\Add_{1}^{\otimes})\ar@/^1.5cm/[rrrr]^{St^{\otimes}}\ar[dr]^{\bQ^{\otimes}}&&& \Cat_{\infty,H\Z}^{ex,\otimes}\ar[r]^{\cF}&\Cat^{ex,\otimes}_{\infty}\\&\Nerve(\dgCat_{1,flat}^{\otimes})\ar[r]&\Nerve(\dgCat_{1,flat})^{\otimes}[W^{-1}_{Morita}]\ar[ur]& &}$$
defining the symmetric monoidal refinement $St^{\otimes}$ of $St$.

It remains to argue that a  symmetric monoidal  flat resolution functor $\bQ$ exists.
We start with the following well-known fact. 
\begin{lem}\label{giojwggrgreg}
\begin{enumerate}
\item There exists a functor $Q$ fitting into the commuting diagram 
$$\xymatrix{&\Ch_{flat}\ar[d]\\\Ab\ar[ur]^{Q}\ar[r]^{(-)[0]}&\Ch}$$
such that the filler is a quasi-isomorphism.
\item The functor $Q$ has a lax symmetric monoidal structure.
\end{enumerate}
\end{lem}
\begin{proof} The natural idea works. The functor $Q$ sends   $A$ in $\Ab$ to $$Q(A):=(F_{1}(A)\stackrel{d_{A}}{\to} F_{0}(A))$$ in $\Ch$, where $F_{0}(A):=\Z[A]$ is the free abelian group generated by the underlying set of $A$, $F_{1}(A)$ is the kernel of the canonical homomorphism $F_{0}(A)\to A$, and $d_{A}$ is the inclusion.
\end{proof}

We define the flat resolution functor for additive categories by 
$$\xymatrix{&&\dgCat_{1,flat}\ar[d]\\\Add_{1}\ar[urr]^{\bQ}\ar[r]&\Cat_{\Ab}\ar[r]\ar@{.>}[ur]&\dgCat_{1}}$$
where the dotted arrow is the natural functor induced from the lax symmetric monoidal functor $Q$  
which provides a functor from $\Ab$-enriched categories to $\Ch$-enriched categories with flat $\Hom$-complexes. Furthermore, the symmetric monoidal structure on $Q$ induces naturally a symmetric monoidal structure on $\bQ$.
\end{proof}

This finishes the proof of Theorem \ref{rgreioghjo34tergergeg21}.
\end{proof}

\bibliographystyle{alpha}
\bibliography{symcontr}

\end{document}